\documentclass[11pt]{amsart}
\usepackage{amsmath,amssymb,amsfonts,amsthm, amscd,indentfirst}
\usepackage{amsmath,latexsym,amssymb,amsmath,
	amscd,amsthm,amsxtra}
\usepackage{hyperref}\usepackage{url}
\usepackage{color}
\usepackage{pdfpages}
\usepackage{graphics}
\usepackage{enumitem}

\usepackage{marvosym}

\usepackage{graphicx}
\usepackage{caption}

\usepackage{epsfig,here}
\usepackage{subfigure,here}

\newtheorem{definition}{Definition}[section]
\newtheorem{theorem}{Theorem}[section]
\newtheorem{prop}{Proposition}[section]

\newtheorem{corollary}{Corollary}[section]

\newtheorem{remark}{\textbf{Remark}}[section]

\textwidth 16cm \textheight 22cm \hoffset=-1.5cm \voffset=-2cm

\def\rr{\mathbb{R}}

\def\hh{\mathbb{H}}

\def\a{\alpha}

\def\<{\langle}
\def\>{\rangle}

\def\n{\nabla}

\numberwithin{equation} {section}

\begin{document}
	
	\title[A mean curvature type flow with capillary boundary in a horoball]{A mean curvature type flow with capillary boundary in a horoball in hyperbolic space}
		\author{Jinyu Guo}

\address{Department of Mathematical Sciences, Tsinghua University, Beijing, 100084, China}



\email{guojinyu14@163.com}


	\begin{abstract}
		In this paper, we study a mean curvature type flow with capillary boundary in a horoball in hyperbolic space. Our flow preserves the volume of the bounded domain enclosed by the hypersurface and monotonically decreases the energy functional. We show that it has the long time existence and converges to a truncated umbilical hypersurface in hyperbolic space. As an application, we solve an isoperimetric type problem for hypersurfaces with capillary boundary in a horoball.\qquad

\end{abstract}
	
	\subjclass[2010]{Primary 53E10, Secondary 35K93}
\keywords{Capillary boundary, Guan-Li type flow, horoball, hyperbolic space}
	
	\maketitle
	
	\medskip

	
\section{Introduction}
	
Guan-Li in \cite{GL0} first introduced a local constrained mean curvature type flow for closed hypersurfaces in an Euclidean space $\mathbb{R}^{n+1}$ as follows
\begin{equation*}
\begin{cases}{}
\partial_{t} x=(n-H\langle x, \nu\rangle) \nu & \text { in } M \times[0, T), \\
x(\cdot,0)=x_{0}& \text { in } M,
\end{cases}
\end{equation*}
where $H$ is mean curvature of hypersurface $M$. In view of classical Minkowski formula, the enclosed volume is preserved while the area is decreasing along the above flow. They showed that the flow exists for all time and converges to a geodesic ball provided the initial hypersurface is star-shaped. This kind of flow has been further explored by Guan-Li \cite{GL0} and Guan-Li-Wang \cite{GLW} in space forms and more general warped product spaces. Some related results can be seen in \cite{GL1, GL5, HL, HLW, WX1} and the references therein.

A hypersurface is said to be with a capillary boundary in a domain $B$ if the hypersurface intersects its support $\partial B$ at a constant contact angle. In particular, the capillary boundary reduces to free boundary when the contact angle is orthogonal.

In \cite{WX3} Wang-Xia proposed a locally constrained Guan-Li type mean curvature type flow with free boundary in an Euclidean ball. They gave a new concept of star-shaped free boundary hypersurfaces in a ball and showed the long time existence and convergence of the flow to a free boundary spherical cap provided the initial hypersurface is star-shaped. 
In the sequence, Wang-Weng \cite{WW} extended their results to the hypersurfaces with capillary boundary and the contact angle $\theta$ satisfying $|\cos\theta|<\frac{3n+1}{5n-1}$ by using the Minkowski-type formulas in a ball.

In \cite{QWX} Qiang-Weng-Xia considered a Guan-Li type flow with free boundary in a geodesic ball in hyperbolic space. They also proved the flow exists for all time and converges to a spherical cap with free boundary if the initial hypersurface is star-shaped. Inspiring from the Wang-Weng's proof, Mei-Weng \cite{MW} recently generalized their results to certain capillary boundary case in geodesic ball in space forms. There are other interesting results about the hypersurfaces with capillary boundary, see for example \cite{MWW, WWX, SWX, WengX, WX2}.


In our previous paper \cite{GWX}, a weighted Minkowski type formula (see \eqref{mink-zero} below) in a horoball in hyperbolic space has been proved by the author with Wang and Xia. Motivated by this formula and the paper of Wang-Weng \cite{WW}, it is natural to study a locally constrained Guan-Li type mean curvature flow with capillary boundary in a hyperbolic horoball, which is our purpose in the paper.

In the following we review some conceptions about horosphere and horoball in hyperbolic space. We use the upper half-space model for the hyperbolic space $\hh^{n+1}$, which is denoted by
\begin{eqnarray}\label{half-space}
\hh^{n+1}=\{x=(x_{1}, x_{2},\cdots,x_{n+1})\in \rr^{n+1}_+: x_{n+1}>0\},\quad \bar g=\frac{1}{x_{n+1}^2}\delta.
\end{eqnarray}

A horosphere, a ``sphere" in $\hh^{n+1}$ whose centre lies at $\partial_{\infty}\mathbb{H}^{n+1}$, up to a hyperbolic isometry,  is given by the horizontal plane
\begin{equation}
\mathcal{H}:=\{x\in\mathbb{R}^{n+1}_{+}:x_{n+1}=1\}.
\end{equation}
By choosing $\bar N=-E_{n+1}=(0,\cdots, 0, -1)$, all principal curvatures of a horosphere are $\kappa=1$. Moreover, by the Gauss equation, the induced metric on a horosphere is flat and in fact a horosphere is isometric to the $n$-dimensional Euclidean space.

In this model, a horoball is denoted by
\begin{equation}\label{horoball}
\mathcal{H}_{+}:=\{x\in\mathbb{R}^{n+1}_{+}:x_{n+1}>1\}.
\end{equation}
Different from geodesic ball, the horoball is a non-compact domain in $\hh^{n+1}$ whose boundary is $\partial\mathcal{H}_{+}=\mathcal{H}$. See Figure 1 below.
\begin{figure}[H]
	\centering
	\includegraphics[height=6.5cm,width=12cm]{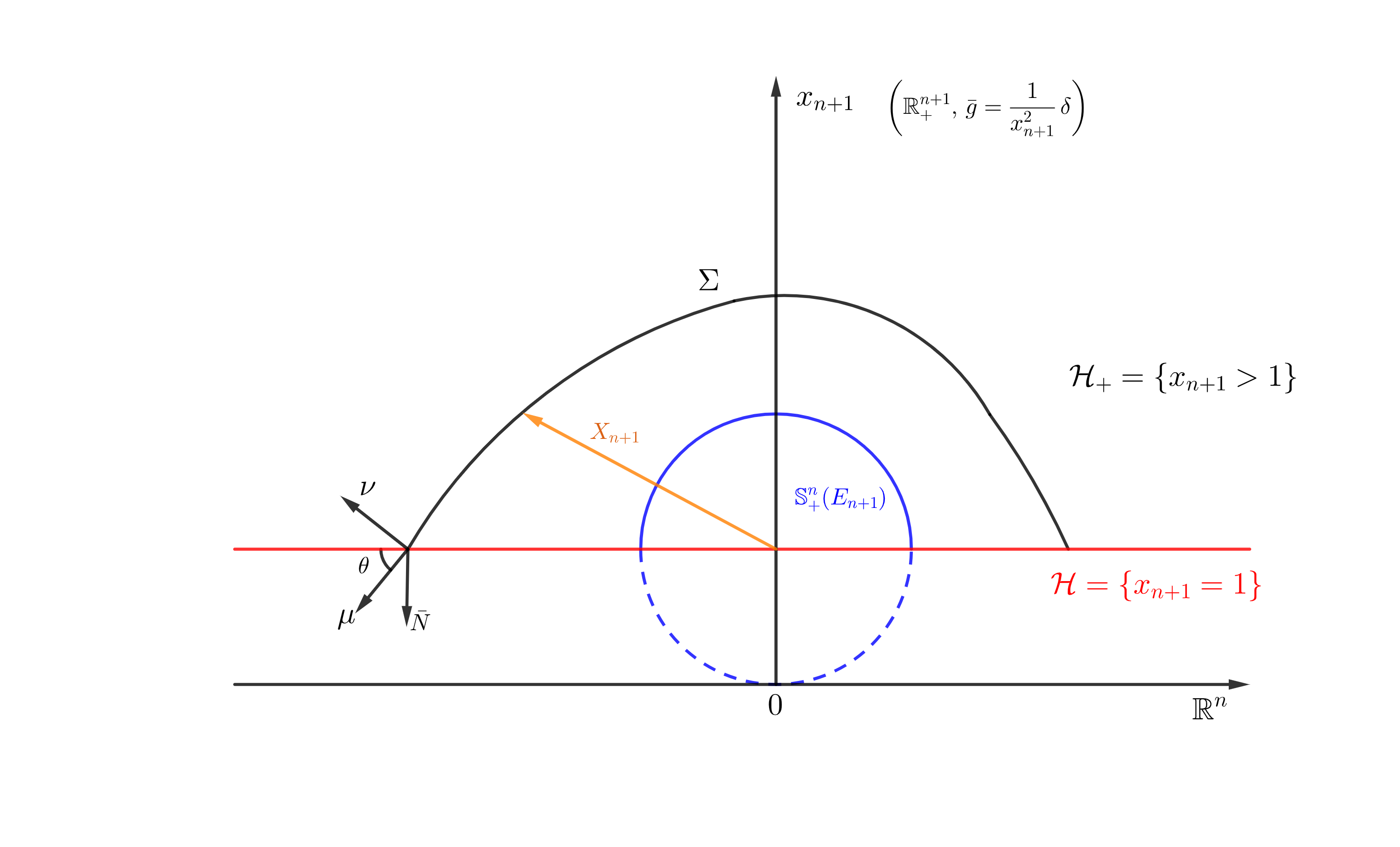}
	\caption{Hypersurface $\Sigma$ with $\theta$-capillary boundary in the horoball $\mathcal{H}_{+}$.}
\end{figure}
We use $x$ to denote the position vector in $\hh^{n+1}$ and $\bar{\nabla}$  the Levi-Civita connection of $\hh^{n+1}$. Let $\{E_i\}_{i=1}^{n+1}$ be the canonical basis of $\rr^{n+1}$.
We use $\<\cdot ,\cdot\>$ and $\bar g$ to denote the inner product of $\rr^{n+1}$ and $\hh^{n+1}$ respectively,  $D$ and $\bar \nabla$ to denote the Levi-Civita connection of $\rr^{n+1}$ and $\hh^{n+1}$ respectively.  Let $\bar E_i = x_{n+1} E_i$. Then $\{ \bar E_i\}_{i=1}^{n+1}$  is an orthonormal basis of $\hh^{n+1}$.
The relationship of $\bar{\nabla}$ and $D$ is given by
\begin{equation}\label{YZ}
  \bar{\nabla}_{Y}Z=D_{Y}Z-Y(\ln x_{n+1})Z-Z(\ln x_{n+1})Y+\langle Y,Z\rangle D(\ln x_{n+1}).
\end{equation}
It follows that
\begin{eqnarray}  \label{a1}
\bar \n_Y x &=& -\bar g (Y, \bar E_{n+1})x + \bar g (Y, x ) \bar E_{n+1}, \\ \label{a2}
\bar \n_Y  E_\a &=& -\bar g (Y, \bar E_{n+1}) E_\a + \bar g (Y, \bar E_\a) E_{n+1}, \quad \forall \a=1,2, \cdots, n, \\ \label{a3}
\bar \n_Y E_{n+1}  &=& - \frac 1{x_{n+1}} Y,
\end{eqnarray}
for any vector $Y$ in $\hh^{n+1}$.

\begin{prop}[{\rm \cite[Proposition 2.3]{GWX}}]\label{lem2.1} \

$(\rm i)$  The vector fields  \,$x$ and $\{E_\alpha\}_{\alpha=1}^{n}$ are Killing vector fields in $\hh^{n+1}$, i.e,
\begin{eqnarray}\label{xKilling}
\frac12\big( \bar g (\bar \n_i x, E_j) + \bar g (\bar \n_j x, E_i)
 ) =\frac12\big( \bar g( \bar \n_i E_{\alpha}, E_j)+
 \bar g( \bar \n_j E_{\alpha}, E_i)
 \big)=0.
\end{eqnarray}

$(\rm {ii})$\,$E_{n+1}$ is a conformal Killing vector field in $\hh^{n+1}$, i.e,
\begin{eqnarray}\label{En-Killing}
\frac12\big (\bar g( \bar \n_i E_{n+1}, E_j)+
\bar g( \bar \n_j E_{n+1}, E_i)
\big)= -\frac{1}{x_{n+1}}\bar{g}_{ij}.
\end{eqnarray}
Here $\bar \n_i = \bar \n _{E_i}$ and $\bar g_{ij}= \bar g(E_i, E_j)$.
\end{prop}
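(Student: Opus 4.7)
The plan is to verify both assertions by direct substitution of $Y=E_i$ into the ambient connection formulas \eqref{a1}--\eqref{a3}, then pairing with $E_j$ via $\bar g$ and symmetrizing in $i,j$. The only data needed beyond those formulas are the coordinate metric identity $\bar g(E_k,E_\ell)=x_{n+1}^{-2}\delta_{k\ell}$ and the rescaling $\bar E_{n+1}=x_{n+1}E_{n+1}$ between the orthonormal frame and the coordinate frame.

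For the Killing property of $x$ in part (i), \eqref{a1} gives $\bar\n_i x=-\bar g(E_i,\bar E_{n+1})\,x+\bar g(E_i,x)\,\bar E_{n+1}$. Pairing with $E_j$ and using the identities above, $\bar g(\bar\n_i x,E_j)$ reduces to a scalar multiple of $-\delta_{i,n+1}x_j+x_i\delta_{j,n+1}$, which is antisymmetric under $i\leftrightarrow j$; its symmetrization therefore vanishes, which is \eqref{xKilling} for $x$. The argument for $E_\a$ ($1\le\a\le n$) is structurally identical: substituting $Y=E_i$ into \eqref{a2} and pairing gives an expression proportional to $-\delta_{i,n+1}\delta_{\a j}+\delta_{i\a}\delta_{j,n+1}$, again antisymmetric in $i,j$, and the symmetric combination vanishes.

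For part (ii), \eqref{a3} is already in the desired form: pairing $\bar\n_i E_{n+1}=-x_{n+1}^{-1}E_i$ with $E_j$ immediately produces $-x_{n+1}^{-1}\bar g_{ij}$, which is symmetric in $i,j$. Hence its symmetric part coincides with the expression itself, giving \eqref{En-Killing}.

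I do not anticipate any serious obstacle; the proposition is essentially bookkeeping once \eqref{a1}--\eqref{a3} are in hand. The only point of care is to keep the hyperbolic metric $\bar g$ separate from the Euclidean pairing $\langle\cdot,\cdot\rangle$, and to handle the conversion factor $x_{n+1}$ between $\bar E_{n+1}$ and $E_{n+1}$ consistently. Conceptually, the three identities record the well-known fact that, in the upper half-space model, horizontal translations (generated by $E_\a$) and dilations about the origin of $\partial_\infty\hh^{n+1}$ (generated by $x$) are hyperbolic isometries, while vertical translations (generated by $E_{n+1}$) are merely conformal; the direct calculation above is the cleanest way to make this precise for the paper's subsequent use.
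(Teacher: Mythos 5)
Your proof is correct. The paper itself does not include a proof of this proposition (it cites \cite[Proposition 2.3]{GWX}), but the approach you take---substituting $Y=E_i$ into \eqref{a1}--\eqref{a3}, pairing with $E_j$ against $\bar g$, and noting antisymmetry (for $x$ and $E_\alpha$) respectively symmetry (for $E_{n+1}$) in $i,j$---is precisely the direct verification those three identities are set up to enable, and it carries through exactly as you describe.
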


\begin{definition}\label{defi}
For $\theta\in(0,\pi)$, $\Sigma$ is said to be a hypersurface in $\mathcal{H}_{+}$ with $\theta$-capillary boundary if $\Sigma$ meets $\mathcal{H}$ at a constant angle $\theta$, that is,
\begin{equation}\label{angle}
  \bar{g}(\bar{N},\nu)=-\cos\theta \quad \text{along}\quad \partial\Sigma,
\end{equation}
where $\nu$ is the outward unit normal vector of $\Sigma$. In particular, if $\theta=\frac{\pi}{2}$, we call $\Sigma$ is free boundary hypersurface.
\end{definition}

\begin{definition}\label{defi-umbilical}
For any given constant $\theta \in(0, \pi)$, we define
\begin{equation}\label{qaaq}
\widehat{{C}_{\theta, r}(E_{n+1})}:=\{x\in\mathbb{R}^{n+1}_{+}:\ |x-(1-r\cos\theta)E_{n+1}|\leq r\,\,\text{and}\,\, x_{n+1} \geq1 \}
, \quad \forall r \in(0, \infty)
\end{equation}
and
\begin{equation}\label{model1}
C_{\theta, r}(E_{n+1}):=\{x\in\mathbb{R}^{n+1}_{+}:\ |x-(1-r\cos\theta)E_{n+1}|=r\,\,\text{and}\,\, x_{n+1} \geq1 \},\quad \forall r \in(0, \infty).
\end{equation}
\end{definition}
\begin{remark}
Here $C_{\theta, r}(E_{n+1})$ is a non-compact umbilical hypersurface around the vector field $E_{n+1}$ with principal curvature $\kappa=\frac{1}{r}-\cos\theta$, which is a model example of properly embedded hypersurfaces with $\theta$-capillary boundary, see \cite[Chapter 10]{Lop}.
\end{remark}

\begin{prop}\label{propsjjjsj}
Along $C_{\theta, r}(E_{n+1})$ we have
\begin{equation}\label{Cr}
  \left(\frac{1}{x_{n+1}}-\cos\theta \bar{g}(x,\nu)\right)=\kappa\,\bar{g}(x-E_{n+1}, \nu),
\end{equation}
where $\kappa$ is the principal curvature of $C_{\theta, r}(E_{n+1})$.
\end{prop}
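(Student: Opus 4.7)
The plan is to verify \eqref{Cr} by a direct coordinate calculation in the upper half-space model. By Definition \ref{defi-umbilical}, $C_{\theta,r}(E_{n+1})$ is the intersection of the horoball with the Euclidean sphere of Euclidean radius $r$ centred at $c:=(1-r\cos\theta)E_{n+1}$. Its outward Euclidean unit normal is $(x-c)/r$, and since $\bar g = x_{n+1}^{-2}\delta$, rescaling to unit hyperbolic length gives the outward hyperbolic unit normal
\begin{equation*}
\nu \;=\; \frac{x_{n+1}}{r}(x - c).
\end{equation*}

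The next step is to rewrite \eqref{Cr} in a cleaner equivalent form. Expanding $\bar g(x - E_{n+1},\nu) = \bar g(x,\nu) - \bar g(E_{n+1},\nu)$ on the right-hand side and using $\kappa + \cos\theta = 1/r$ (from the remark after Definition \ref{defi-umbilical}), the claim is equivalent to
\begin{equation*}
\frac{1}{x_{n+1}} \;=\; \frac{1}{r}\,\bar g(x,\nu) \;-\; \kappa\,\bar g(E_{n+1},\nu).
\end{equation*}
Using the explicit formula for $\nu$ together with $\bar g(Y,Z) = x_{n+1}^{-2}\langle Y,Z\rangle$, a short calculation gives
\begin{equation*}
\bar g(x,\nu) \;=\; \frac{\langle x, x-c\rangle}{x_{n+1}\,r}, \qquad \bar g(E_{n+1},\nu) \;=\; \frac{x_{n+1}-(1-r\cos\theta)}{x_{n+1}\,r}.
\end{equation*}

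Substituting these into the equivalent form above, clearing the common factor $x_{n+1}r^2$, and using the identity $\kappa r = 1 - r\cos\theta$, the claim reduces to
\begin{equation*}
r^2 \;=\; \langle x, x-c\rangle - (1-r\cos\theta)\bigl(x_{n+1}-(1-r\cos\theta)\bigr) \;=\; |x|^2 - 2\langle x,c\rangle + |c|^2 \;=\; |x-c|^2,
\end{equation*}
which is precisely the defining equation of $C_{\theta,r}(E_{n+1})$. The only mild obstacle is keeping track of the conformal rescaling factor $x_{n+1}$ relating the Euclidean and hyperbolic outward unit normals; once $\nu$ is identified correctly, the whole identity collapses to the sphere equation itself.
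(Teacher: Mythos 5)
Your proof is correct and takes essentially the same route as the paper: both start from the explicit formula $\nu = \frac{x_{n+1}}{r}\bigl(x-(1-r\cos\theta)E_{n+1}\bigr)$ for the hyperbolic outward unit normal and reduce the identity to elementary algebra in the half-space model. The paper organizes the computation by inverting that formula to write $x = \frac{r\nu}{x_{n+1}} + (1-r\cos\theta)E_{n+1}$ and substituting on the left-hand side (so the sphere equation is used implicitly through $\bar g(\nu,\nu)=1$), whereas you expand the inner products directly and reduce to the defining equation $|x-c|^2 = r^2$; the two are just different bookkeeping of the same calculation.
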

\begin{proof}
From Definition \ref{defi-umbilical}, we know that the outer unit normal vector of $C_{\theta, r}(E_{n+1})$ is
\begin{equation}\label{normal-1}
  \nu=x_{n+1}\cdot\frac{x-(1-r\cos\theta)E_{n+1}}{r}.
\end{equation}
By using \eqref{normal-1} we have
\begin{eqnarray}
\left(\frac{1}{x_{n+1}}-\cos\theta \bar{g}(x,\nu)\right)&=&\frac{1}{x_{n+1}}-\cos\theta \bar{g}\left(\frac{r\nu}{x_{n+1}}+(1-r\cos\theta)E_{n+1},\nu\right)\\
&=&(1-r\cos\theta)\left(\frac{1}{x_{n+1}}-\cos\theta \bar{g}(E_{n+1},\nu)\right)\nonumber\\
&=&\left(\frac{1}{r}-\cos\theta\right)\bar{g}(x-E_{n+1}, \nu),\nonumber
\end{eqnarray}
\end{proof}

Next we introduce a conformal Killing vector field $X_{n+1}$ and a function $V_{n+1}$ in $\hh^{n+1}$ that  we will use
later. Denote
  \begin{equation}\label{cfkill2}
     X_{n+1}:=x-E_{n+1}, \quad V_{n+1}:=\frac{1}{x_{n+1}}.
  \end{equation}
\begin{prop}[{\rm \cite[Proposition 2.3]{GX2}}]\label{xaa}\

$(i)$\,$X_{n+1}$ is a conformal Killing vector field with $\frac{1}{2}\mathcal{L}_{X_{n+1}}\bar{g}=V_{n+1}\bar{g}$, namely
\begin{eqnarray}\label{XXaeq1}
\frac12\big[\bar \n_i (X_{n+1})_j+ \bar \n_j (X_{n+1})_i\big]=V_{n+1}\bar{g}_{ij}.
\end{eqnarray}

$ ({ii})$ $X _{n+1}\mid_{\mathcal{H}}$ is a tangential vector field on $\mathcal{H}$, i.e.,
\begin{eqnarray}\label{XXaeq2}
\bar{g}(X_{n+1}, \bar{N})=0 \quad\text{\rm on}\,\,\mathcal{H}.
\end{eqnarray}
\end{prop}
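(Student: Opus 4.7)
The proof should be short and direct, drawing almost entirely on results already at hand in the excerpt. For part (i), the plan is to exploit the linearity of the Lie derivative and decompose $X_{n+1} = x - E_{n+1}$ into its two natural constituents. Since the symmetrized covariant derivative $\tfrac12(\bar\n_i (\cdot)_j + \bar\n_j(\cdot)_i)$ is $\rr$-linear in the vector field, one may write
\begin{equation*}
\tfrac12\bigl(\bar\n_i (X_{n+1})_j+\bar\n_j (X_{n+1})_i\bigr)=\tfrac12\bigl(\bar\n_i x_j+\bar\n_j x_i\bigr)-\tfrac12\bigl(\bar\n_i(E_{n+1})_j+\bar\n_j(E_{n+1})_i\bigr).
\end{equation*}
By the Killing property \eqref{xKilling} the first summand is zero, and by the conformal Killing identity \eqref{En-Killing} the second summand contributes $-\bigl(-\tfrac{1}{x_{n+1}}\bigr)\bar g_{ij}=V_{n+1}\bar g_{ij}$. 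Combining these yields \eqref{XXaeq1}.

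For part (ii), the plan is a direct computation on the horosphere $\mathcal{H}=\{x_{n+1}=1\}$, where the unit inward normal to $\mathcal{H}_+$ relative to $\bar g$ is $\bar N=-E_{n+1}$. Using the conformal form $\bar g=x_{n+1}^{-2}\delta$, I compute $\bar g(x,E_{n+1})=x_{n+1}^{-2}\langle x,E_{n+1}\rangle=x_{n+1}^{-1}$ and $\bar g(E_{n+1},E_{n+1})=x_{n+1}^{-2}$; evaluating on $\mathcal{H}$ where $x_{n+1}=1$ shows that both quantities equal $1$, so $\bar g(X_{n+1},\bar N)=-\bar g(x,E_{n+1})+\bar g(E_{n+1},E_{n+1})=-1+1=0$, which is \eqref{XXaeq2}.

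There is no substantial obstacle: the statement is precisely the right combination of the two identities already established in Proposition \ref{lem2.1}, together with one elementary calculation on the horosphere. The only delicate points are keeping track of the sign when subtracting the conformal Killing piece of $E_{n+1}$, and remembering that the conformal factor $x_{n+1}^{-2}$ in $\bar g$ is exactly what forces the tangency on $\mathcal{H}$; both are handled by direct substitution.
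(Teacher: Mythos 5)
Your proof is correct and complete. The paper itself does not reproduce a proof of this proposition (it is quoted from [GX2], cited as Proposition 2.3 there), but your argument is exactly the natural one given the tools at hand: part (i) follows by the linearity of the symmetrized covariant derivative applied to the decomposition $X_{n+1}=x-E_{n+1}$, with the Killing identity \eqref{xKilling} killing the $x$-contribution and the conformal Killing identity \eqref{En-Killing} producing $-\bigl(-\tfrac{1}{x_{n+1}}\bigr)\bar g_{ij}=V_{n+1}\bar g_{ij}$; part (ii) is a one-line evaluation using $\bar g=x_{n+1}^{-2}\delta$ on $\{x_{n+1}=1\}$. The only nitpick is terminological: with $\mathcal{H}_+=\{x_{n+1}>1\}$, the vector $\bar N=-E_{n+1}$ is the \emph{outward} unit normal of $\mathcal{H}_+$ along $\mathcal{H}$, not the inward one; this has no bearing on the orthogonality computation, which is unchanged under a sign flip of $\bar N$.
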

\begin{prop}[{\rm \cite[Proposition 2.4]{GX2}}]\label{xaa2} \, $V_{n+1}$ satisfies the following properties:
\
\begin{eqnarray}\label{va2}
  \bar{\n}^2 V_{n+1}&=& V_{n+1}  \bar{g } \quad\quad\text{in}\,\, \hh^{n+1},\label{Vn}\\
\partial_{\bar{N}}V_{n+1}&=& V_{n+1} \quad  \quad\,\,\text{on}\,\, \mathcal{H}.\label{cur}
\end{eqnarray}
\end{prop}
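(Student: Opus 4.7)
The plan is to verify both identities by a direct computation from the definition $V_{n+1}=1/x_{n+1}$, using the formulas \eqref{a1}--\eqref{a3} for the Levi-Civita connection $\bar{\nabla}$. First I would identify the hyperbolic gradient of $V_{n+1}$. For any tangent vector $Y$ on $\hh^{n+1}$,
\begin{equation*}
Y(V_{n+1}) = -\frac{Y_{n+1}}{x_{n+1}^2} = -\bar{g}(Y, E_{n+1}),
\end{equation*}
since $\bar{g}(Y, E_{n+1}) = \frac{1}{x_{n+1}^2}\langle Y, E_{n+1}\rangle$. Hence, viewed as a vector field on $\hh^{n+1}$, $\bar{\nabla}V_{n+1} = -E_{n+1}$.

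For the Hessian, I would then apply \eqref{a3} to compute
\begin{equation*}
\bar{\nabla}^2 V_{n+1}(Y,Z) = \bar{g}(\bar{\nabla}_Y\bar{\nabla}V_{n+1}, Z) = -\bar{g}(\bar{\nabla}_Y E_{n+1}, Z) = \frac{1}{x_{n+1}}\bar{g}(Y,Z) = V_{n+1}\bar{g}(Y,Z),
\end{equation*}
which establishes \eqref{va2}. For the boundary identity, along $\mathcal{H}=\{x_{n+1}=1\}$ the outward unit normal of the horoball $\mathcal{H}_+$ is $\bar{N} = -E_{n+1}$, and this is a hyperbolic unit vector precisely because $x_{n+1}=1$ on $\mathcal{H}$. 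Therefore
\begin{equation*}
\partial_{\bar{N}}V_{n+1} = -E_{n+1}(1/x_{n+1}) = \frac{1}{x_{n+1}^2} = 1 = V_{n+1} \quad \text{on }\mathcal{H},
\end{equation*}
which is \eqref{cur}.

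I do not anticipate a substantial obstacle: the computation is essentially forced by the formulas already displayed. The one point requiring care is to keep the Euclidean and hyperbolic inner products (and the corresponding notions of gradient and unit normal) distinct, both of which are handled explicitly in the paper's setup. Since the proposition is quoted from \cite{GX2}, I expect the original proof to proceed along exactly these lines.
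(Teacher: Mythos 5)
Your computation is correct: the identification $\bar{\nabla}V_{n+1}=-E_{n+1}$, the use of \eqref{a3} for the Hessian, and the observation that $\bar N=-E_{n+1}$ is the outward unit normal of $\mathcal{H}_+$ (unit precisely because $x_{n+1}=1$ there) are exactly the steps needed. The paper only cites \cite{GX2} for this proposition, and your direct verification is the standard argument given there, so there is nothing to add.
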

Utilizing the above properties of $X_{n+1}$, the author with Wang and Xia showed that a weighted Minkowski type formula with $\theta$-capillary boundary in a horoball $\mathcal{H}_{+}$ as follows \cite{GWX}
\begin{equation}\label{mink-zero}
 \int_{\Sigma} \left(\frac{n}{x_{n+1}}-n\cos \theta \bar{g}(x,\nu)\right)d A=\int_{\Sigma} H\bar{g}(X_{n+1}, \nu) dA,
\end{equation}
where $H$ is the mean curvature of $\Sigma$.

Motivated by the formula \eqref{mink-zero}, we consider a locally constraint Guan-Li type mean curvature flow with $\theta$-capillary boundary in $\mathcal{H}_{+}$. Let $\Sigma_{t}, \,t \in[0, T)$ be a family of hypersurfaces with boundary $\partial\Sigma_{t}$ in $\mathcal{H}_{+}$ given by a family of isometric embeddings $x(\cdot, t): M \rightarrow \bar{\mathcal{H}}_{+}$ from a compact manifold $M$ with boundary $\partial M$, such that
\begin{equation}\label{flowsss}
  \operatorname{int}\left(\Sigma_{t}\right)=x(\operatorname{int}(M), t) \subset \mathcal{H}_{+}, \quad \partial \Sigma_{t}=x(\partial M, t) \subset \mathcal{H} .
\end{equation}
Let $x(\cdot, t)$ satisfy
\begin{equation}\label{mainflow}
\begin{cases}{}
(\partial_{t} x)^{\perp}=f \nu & \text { in } M \times[0, T), \\
\bar{g}(\nu, \bar{N} \circ x)=-\cos\theta & \text { on } \partial M \times[0, T),\\
x(\cdot,0)=x_{0}& \text { in } M.
\end{cases}
\end{equation}
Here
\begin{equation}\label{ffff}
f:=\frac{n}{x_{n+1}}-n\cos\theta\bar{g}(x,\nu)-H \bar{g}\left(X_{n+1}, \nu\right),
\end{equation}
and $X_{n+1}$ is given by \eqref{cfkill2}.


\begin{definition}\label{starshaped}
A hypersurface $\Sigma \subset \mathcal{H}_{+}$ is called star-shaped hypersurface with respect to $E_{n+1}$ if $\bar{g}\left(X_{n+1}, \nu\right)>0$ along $\Sigma$.
\end{definition}

Our main result in this paper is the following theorem.

\begin{theorem}\label{mainthm}
Let $x_{0}: M \rightarrow \mathcal{H}_{+}$ be a star-shaped hypersurface with $\theta$-capillary boundary in $\mathbb{H}^{n+1}$. Assume $|\cos\theta|<\frac{3n+1}{5n-1}$. Then the flow \eqref{mainflow} exists for $t \in[0, \infty)$ with uniform $C^{\infty}$-estimates. Moreover, $x(\cdot, t)$ converges in the $C^{\infty}$ topology as $t \rightarrow \infty$ to an umbilical hypersurface around the vector $E_{n+1}$ whose enclosed volume is the same as that of $x_{0}$.
\end{theorem}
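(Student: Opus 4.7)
My plan follows the standard pattern for Guan--Li type locally constrained flows, adapted to the capillary boundary setting in a horoball. The first step is to verify the two structural features which motivate the definition \eqref{ffff}: along \eqref{mainflow} the enclosed hyperbolic volume is preserved, while the capillary energy
\[
\mathcal{E}(\Sigma_t)=|\Sigma_t|-\cos\theta\,|\partial_{\mathcal{H}}\Omega_t|
\]
is non-increasing. The volume identity is immediate from $\partial_t|\Omega_t|=\int_{\Sigma_t}f\,dA$, the Minkowski formula \eqref{mink-zero}, and the capillary condition in \eqref{mainflow}. The energy monotonicity then follows from the first variation of $\mathcal{E}$ together with \eqref{mink-zero}, giving $\frac{d}{dt}\mathcal{E}=-\int_{\Sigma_t}f^2\,dA+\text{boundary terms that vanish by }\bar g(\nu,\bar N)=-\cos\theta$.

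Next I would set up the parabolic PDE by parametrizing $\Sigma_t$ as a radial graph of a function $\rho(\xi,t)$ around the vector $E_{n+1}$ (the center of the model umbilical family $C_{\theta,r}(E_{n+1})$). The star-shaped condition $\bar g(X_{n+1},\nu)>0$ makes this parametrization legitimate. A maximum principle argument, treating the evolution equation of $\bar g(X_{n+1},\nu)$ together with its Neumann-type boundary condition coming from \eqref{angle} and Proposition \ref{xaa}(ii), should show that star-shapedness is preserved. The $C^0$ bound on $\rho$ then follows by applying the maximum principle to $\rho_{\max}$ and $\rho_{\min}$: at a spatial maximum the umbilical comparison geometry from Proposition \ref{propsjjjsj} forces $f\leq 0$, and similarly at a minimum, producing upper and lower barriers corresponding to umbilical models of matching volume.

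The main obstacle is the $C^1$ (gradient) estimate, i.e.\ the two-sided bound on the support function $u=\bar g(X_{n+1},\nu)$ in terms of $V_{n+1}=1/x_{n+1}$. Following the strategy of Wang--Weng \cite{WW} and Mei--Weng \cite{MW}, I would consider an auxiliary quantity of the form
\[
\Phi=\log\bigl(u-\cos\theta\,\bar g(x-E_{n+1},\bar N)\bigr)-\log V_{n+1}+\text{lower-order correction}
\]
(or an analogous combination using $E_{n+1}$ and $X_{n+1}$) and compute its parabolic evolution. The interior maximum principle yields a uniform bound once the zeroth-order data is controlled. The boundary analysis is the delicate point: differentiating the capillary condition together with the conformal Killing properties \eqref{XXaeq1}--\eqref{XXaeq2} and \eqref{cur}--\eqref{Vn} produces a boundary gradient term whose sign one must control, and it is precisely at this step that the quantitative restriction $|\cos\theta|<\frac{3n+1}{5n-1}$ enters to guarantee the right sign so that the Hopf lemma can be applied. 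This mirrors the threshold found by Wang--Weng in the Euclidean ball and by Mei--Weng in space forms.

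With the $C^0$ and $C^1$ estimates in hand, the flow \eqref{mainflow} becomes uniformly parabolic. Standard Krylov--Safonov theory for oblique boundary value problems, followed by Schauder bootstrapping, gives uniform $C^{k,\alpha}$ bounds for all $k$, yielding long-time existence. For convergence, the energy monotonicity
\[
\mathcal{E}(\Sigma_0)-\mathcal{E}(\Sigma_\infty)\geq\int_0^\infty\!\!\int_{\Sigma_t}f^2\,dA\,dt
\]
combined with the uniform estimates forces $f\to 0$ smoothly along a subsequence. By \eqref{mink-zero} and the rigidity encoded in Proposition \ref{propsjjjsj} (together with an umbilicity argument exploiting the Codazzi equation and the capillary boundary condition), any smooth limit with $f\equiv 0$ must be a member of the family $C_{\theta,r}(E_{n+1})$, and the preserved volume selects a unique radius $r$. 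A standard linearization argument at this limit then upgrades subsequential convergence to full smooth convergence.
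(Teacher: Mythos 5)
Your overall roadmap (reduce to a graphical PDE, establish $C^0$ and $C^1$ a priori bounds, conclude long-time existence, then use energy monotonicity for convergence) matches the paper, but there are two substantive errors and one unverified ingredient.

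\textbf{The energy dissipation formula is wrong.} You write $\frac{d}{dt}\mathcal{E}=-\int_{\Sigma_t}f^2\,dA$, but the first variation of the capillary energy under normal speed $f$ with the angle condition maintained is $\frac{d}{dt}\mathcal{E}=\int_{\Sigma_t}fH\,dA_t$ (see \cite{RS}); your formula would only hold if $f$ were proportional to $H$ minus a constant, which it is not. The monotonicity $\frac{d}{dt}\mathcal{E}\le 0$ in the paper is obtained by substituting the definition \eqref{ffff} of $f$ into $\int fH$, using the first Minkowski formula \eqref{mink-zero} and the second one \eqref{ddsdsd}, to get
\begin{equation*}
\frac{d}{dt}\mathcal{E}=-\frac{1}{n-1}\int_{\Sigma_t}\sum_{i<j}(\kappa_i-\kappa_j)^2\,\bar g(X_{n+1},\nu)\,dA_t\le 0.
\end{equation*}
This is not cosmetic: your argument that $f\to 0$ from a bound on $\int_0^\infty\int f^2$ does not get off the ground, because $\int f^2$ is not the dissipation. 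The actual dissipation measures the trace-free second fundamental form, and $\bar g(X_{n+1},\nu)\ge c>0$ (from the $C^0$, $C^1$ estimates) then gives subconvergence to an umbilical limit directly, without needing a separate "umbilicity argument via Codazzi."

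\textbf{The angle threshold is an interior obstruction, not a boundary one.} You claim $|\cos\theta|<\frac{3n+1}{5n-1}$ is needed to get the right sign in a boundary Hopf-lemma step after differentiating the capillary condition. In the paper's gradient estimate (Proposition \ref{proposition3.2}), the test function is $\Phi=(1+Kd)v+\cos\theta\,\sigma(\nabla u,\nabla d)$ where $v=\sqrt{1+|\nabla u|^2}$ and $d=\mathrm{dist}(\cdot,\partial\mathbb{S}^n_+(E_{n+1}))$; the coefficient $K$ is chosen large enough to \emph{exclude} a boundary maximum, with no restriction on $\theta$ whatsoever at that step. The restriction on $\cos\theta$ is used entirely in the interior computation, where the decomposition of $\mathcal{L}\Phi$ produces the group $I_{12}+I_{22}+I_{32}+I_{13}+I_{23}$, and one needs $\varepsilon_0=\frac{3n+1-a_0(5n-1)}{4n(1-a_0)}\in(0,1)$ for some $a_0>|\cos\theta|$; that is exactly what forces $|\cos\theta|<\frac{3n+1}{5n-1}$. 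Attributing the constraint to the boundary condition would mislead you about which terms to estimate and why.

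\textbf{The auxiliary function.} Your candidate $\Phi=\log\bigl(\bar g(X_{n+1},\nu)-\cos\theta\,\bar g(X_{n+1},\bar N)\bigr)-\log V_{n+1}+\ldots$ is a support-function quantity, while the paper (following \cite{WW}) works with the graphical gradient $v$ and the boundary-distance function $d$. Since $\bar g(X_{n+1},\bar N)=0$ on $\mathcal{H}$ by Proposition \ref{xaa}(ii), your second term vanishes along the boundary and it is unclear how it encodes the capillary condition; you would need to verify both that the boundary derivative of your $\Phi$ has the right sign and that an interior maximum can be handled. The paper's $\Phi$ is designed so that, at the boundary, $\sigma(\nabla u,\nabla d)=-\cos\theta\,v$ cancels against the Neumann condition, and the $Kd$ weight pushes the maximum inward. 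Finally, your closing remark that a "standard linearization argument" upgrades subsequential to full convergence is optimistic: the paper must still prove $E_\infty=E_{n+1}$ by tracking the radii $r_{\max}(t)$, $r_{\min}(t)$ of tangent umbilical barriers via the evolution equation \eqref{eq456}, which is a separate and non-trivial step.
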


\begin{remark}\

\begin{itemize}
 \item [(i)]The hypothesis of $|\cos\theta|<\frac{3n+1}{5n-1}$ is a technique reason as in \cite{MW,WW} when we prove the uniform gradient
estimate to the flow \eqref{mainflow}, see Section \ref{sec3} in detail. We hope this assumption could be removed in the future.
  \item [(ii)]The above umbilical hypersurface in $\hh^{n+1}$ could be a piece of a totally geodesic hyperplane, an equidistant hypersurface, a horosphere or a geodesic sphere.

\end{itemize}

\end{remark}
In particular, when the contact angle is $\pi/2$, we have
\begin{corollary}\label{freebdythm}
Let $x_{0}: M \rightarrow \mathcal{H}_{+}$ be a star-shaped hypersurface with free boundary in $\mathbb{H}^{n+1}$. Then the flow \eqref{mainflow} exists for $t \in[0, \infty)$ with uniform $C^{\infty}$-estimates. Moreover, $x(\cdot, t)$ converges in the $C^{\infty}$ topology as $t \rightarrow \infty$ to an umbilical hypersurface around $E_{n+1}$ whose enclosed volume is the same as that of $x_{0}$.
\end{corollary}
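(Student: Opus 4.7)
The plan is to follow the now standard Guan--Li--Wang scheme adapted to the capillary setting in the horoball, where the conformal Killing field $X_{n+1}$ replaces the position vector. I would first verify the two structural identities that justify the choice of $f$: integrating \eqref{mink-zero} shows $\int_{\Sigma_t}f\,dA=0$, so the enclosed volume $|\Omega_t|$ is preserved, and a direct computation of $\frac{d}{dt}\bigl(|\Sigma_t|-\cos\theta\,|\partial\Omega_t\cap\mathcal{H}|\bigr)$, combined with the capillary boundary condition \eqref{angle} and Proposition~\ref{xaa}(ii) (which guarantees $X_{n+1}$ is tangent to $\mathcal{H}$), gives
\begin{equation*}
\frac{d}{dt}\mathcal{E}(\Sigma_t)=-\int_{\Sigma_t}\frac{f^{2}}{\bar g(X_{n+1},\nu)}\,dA\le 0
\end{equation*}
on star-shaped hypersurfaces. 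This monotonicity, together with the oblique boundary condition, yields short-time existence by the standard linear parabolic theory of Ural'tseva (the boundary operator $\bar g(\nu,\bar N)+\cos\theta$ is strictly oblique).

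Next I would represent $\Sigma_t$ as a radial graph. Since star-shapedness is formulated with respect to $X_{n+1}=x-E_{n+1}$, the natural parametrization is polar coordinates centred at $E_{n+1}$ (i.e.\ on the geodesic sphere/horosphere foliation issuing from that point), writing $\Sigma_t=\{(\rho(\xi,t),\xi)\}$, and reducing \eqref{mainflow} to a scalar parabolic equation for $\rho$ with a Neumann-type oblique boundary condition on $\mathcal{H}$. The $C^0$ bound on $\rho$ then follows from the maximum principle: at an interior extremum $f$ reduces to an expression of the form that forces $\rho$ to lie between the radii of the two umbilical caps $C_{\theta,r}(E_{n+1})$ of Definition~\ref{defi-umbilical} having the same enclosed volume as $x_0$, using the identity \eqref{Cr} to match the signs. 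Preservation of star-shapedness and the gradient estimate will be extracted by applying the maximum principle to an auxiliary quantity of the form $\bar g(X_{n+1},\nu)/V_{n+1}$ (or more precisely $\langle X_{n+1},\nu\rangle/(x_{n+1}-\cos\theta\langle x,\nu\rangle)$), using \eqref{XXaeq1}--\eqref{cur}; the capillary boundary term produced by this auxiliary function carries a sign only when $|\cos\theta|<\frac{3n+1}{5n-1}$, and reproducing that computation in the horoball geometry is exactly where I expect the main technical difficulty.

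With $C^1$ bounds in hand, the $C^2$ estimate proceeds via the evolution equations for $H$ and for the second fundamental form $h_{ij}$. The boundary contribution is controlled through the Codazzi-type identities on the umbilical totally geodesic support $\mathcal{H}$ together with Proposition~\ref{xaa2}: \eqref{Vn} and \eqref{cur} make $V_{n+1}$ an excellent test function that kills the unwanted boundary terms. One then obtains a uniform upper bound on $|A|$, and a Stampacchia or Krylov--Safonov argument adapted to the oblique boundary gives Hölder estimates for second derivatives. Standard Schauder bootstrapping produces uniform $C^{\infty}$-estimates, hence long-time existence on $[0,\infty)$.

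Finally, for the convergence as $t\to\infty$, the monotonicity of $\mathcal{E}$ and uniform regularity force
\begin{equation*}
\int_{0}^{\infty}\int_{\Sigma_t}\frac{f^{2}}{\bar g(X_{n+1},\nu)}\,dA\,dt<\infty,
\end{equation*}
so along a subsequence $t_k\to\infty$ one has $f\to 0$ in $C^{\infty}$. On the limit hypersurface $\Sigma_\infty$ the equation
\begin{equation*}
\frac{n}{x_{n+1}}-n\cos\theta\,\bar g(x,\nu)=H\,\bar g(X_{n+1},\nu)
\end{equation*}
holds pointwise; combined with the Minkowski formula \eqref{mink-zero} and a Heintze--Karcher/Reilly type rigidity argument on star-shaped capillary hypersurfaces in $\mathcal{H}_+$ (precisely of the form captured in Proposition~\ref{propsjjjsj}), this forces $\Sigma_\infty$ to be umbilical, i.e.\ a piece of $C_{\theta,r}(E_{n+1})$ for the unique $r$ matching the conserved volume of $x_0$. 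Convergence of the full flow, not just a subsequence, then follows by the $C^{\infty}$-bounds and a standard linearization/uniqueness argument at $\Sigma_\infty$.
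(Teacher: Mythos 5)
The paper's proof of Corollary~\ref{freebdythm} is a one-line observation: set $\theta=\pi/2$ in Theorem~\ref{mainthm}, so $\cos\theta=0$ and the angle restriction $|\cos\theta|<\frac{3n+1}{5n-1}$ is automatically satisfied. Your proposal instead rebuilds the full proof of the main theorem from scratch, which is not wrong in spirit but misses that the corollary requires no new work once Theorem~\ref{mainthm} is available.

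Within your sketch there is a concrete error in the monotonicity formula. You claim $\frac{d}{dt}\mathcal{E}(\Sigma_t)=-\int_{\Sigma_t}\frac{f^2}{\bar g(X_{n+1},\nu)}\,dA$, but the first variation gives $\frac{d}{dt}\mathcal{E}=\int fH\,dA$, and writing $f=n\phi-H\bar g(X_{n+1},\nu)$ with $\phi=\tfrac{1}{x_{n+1}}-\cos\theta\,\bar g(x,\nu)$ one checks that $\int fH=-\int\frac{f^2}{\bar g(X_{n+1},\nu)}$ would require the extra identity $\int\frac{f\phi}{\bar g(X_{n+1},\nu)}\,dA=0$, which does not follow from the Minkowski formula or from anything else established in the paper. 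The correct route, and the one the paper takes via \eqref{ddsdsd}, is $\int fH=n\int\phi H-\int H^2\bar g(X_{n+1},\nu)=-\frac{1}{n-1}\int\sum_{i<j}(\kappa_i-\kappa_j)^2\,\bar g(X_{n+1},\nu)\,dA\le 0$; this trace-free second-fundamental-form expression is also what drives the convergence argument, since along a subsequence the integrand tends to zero and $\bar g(X_{n+1},\nu)\ge c>0$ forces umbilicity directly, without needing your additional Heintze--Karcher/Reilly-type rigidity step. Similarly, the gradient estimate in the paper is carried out on the scalar graph function $u=\log\rho$ with the auxiliary quantity $\Phi=(1+Kd)v+\cos\theta\,\sigma(\nabla u,\nabla d)$, not with a geometric ratio like $\bar g(X_{n+1},\nu)/V_{n+1}$; in the free-boundary case the $\cos\theta$ term vanishes and $\Phi=(1+Kd)v$, which is precisely where your observation that the angle condition becomes vacuous enters --- but you should recognize that this is already encoded in Theorem~\ref{mainthm} and makes the corollary an immediate specialization rather than a statement needing a fresh proof.
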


As an application, we get the following isoperimetric type inequalities for starshaped hypersurfaces with capillary boundary in a horoball, which can be viewed as the special case in our previous result \cite{GWX}.
\begin{corollary}\label{corollay222} Among the star-shaped hypersurfaces with capillary boundary such that fixed volume of the enclosed domain in a horoball, umbilical hypersurfaces are the energy minimizers provided the contact angle $\theta$ satisfying $|\cos\theta|<\frac{3n+1}{5n-1}$.
\end{corollary}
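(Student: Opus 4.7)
The plan is to use the flow \eqref{mainflow} constructed for Theorem \ref{mainthm} as a volume-preserving deformation that decreases a suitable energy. The corollary then follows by comparing the initial hypersurface with its limiting umbilical shape.

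First, I would identify the correct energy functional. Guided by the weighted Minkowski identity \eqref{mink-zero} and by the appearance of $V_{n+1}=1/x_{n+1}$ in the flow speed \eqref{ffff}, the natural candidate is the weighted capillary energy
$$\mathcal{E}(\Sigma) := \int_{\Sigma} V_{n+1}\, dA - \cos\theta \int_{T(\Sigma)} V_{n+1}\, d\sigma,$$
where $T(\Sigma)\subset\mathcal{H}$ is the wetted region on the horosphere bounded by $\partial\Sigma$. The isoperimetric-type problem in the statement is exactly the minimization of $\mathcal{E}$ at fixed enclosed volume, so it suffices to show that $\mathcal{E}(\Sigma_0)\geq \mathcal{E}(C_{\theta,r}(E_{n+1}))$ for the umbilical hypersurface $C_{\theta,r}(E_{n+1})$ enclosing the same volume as $\Sigma_0$.

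Second, I would verify the two structural properties of the flow. Volume preservation is immediate: the enclosed volume changes by $\int_{\Sigma_t} f\, dA$, and inserting \eqref{ffff} into the Minkowski formula \eqref{mink-zero} makes this integral vanish. For the energy, I would compute the first variation of $\mathcal{E}$ under the normal velocity $f\nu$. Using Propositions \ref{xaa} and \ref{xaa2} together with the capillary condition \eqref{angle}, the boundary contribution from the free-energy term absorbs the wetted-area variation, leaving a bulk expression of the shape
$$\frac{d}{dt}\mathcal{E}(\Sigma_t) = -\int_{\Sigma_t} \left(H\bar g(X_{n+1},\nu)-\tfrac{n}{x_{n+1}}+n\cos\theta\, \bar g(x,\nu)\right) V_{n+1}\, dA = -\int_{\Sigma_t} f\, V_{n+1}\, dA.$$
Combined with $\int_{\Sigma_t} f\, dA = 0$ from volume preservation, a Newton--Maclaurin / Cauchy--Schwarz inequality applied to the positive weight $V_{n+1}$ and the curvature structure of $f$ yields $\tfrac{d}{dt}\mathcal{E}(\Sigma_t)\leq 0$, with equality only on umbilical leaves.

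Third, I would invoke Theorem \ref{mainthm}: starting from a star-shaped $\Sigma_0$ with $|\cos\theta|<\frac{3n+1}{5n-1}$, the flow exists for all time and converges smoothly to some umbilical hypersurface $\Sigma_\infty = C_{\theta,r}(E_{n+1})$ enclosing the same volume as $\Sigma_0$. Monotonicity of $\mathcal{E}$ along the flow then gives $\mathcal{E}(\Sigma_0)\geq \mathcal{E}(\Sigma_\infty)$, which is the desired inequality. The main obstacle is the monotonicity step: producing the clean sign for $\tfrac{d}{dt}\mathcal{E}(\Sigma_t)$ in the presence of the weight $V_{n+1}$ and the moving capillary boundary $\partial\Sigma_t\subset\mathcal{H}$ requires careful cancellation between the interior and boundary variations, making crucial use of the tangentiality \eqref{XXaeq2} of $X_{n+1}$ along $\mathcal{H}$ and of the Neumann-type identity \eqref{cur} for $V_{n+1}$.
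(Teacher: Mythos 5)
Your overall strategy is the right one and it matches the paper's: flow a star-shaped initial datum by \eqref{mainflow}, observe volume is conserved and energy decreases, invoke Theorem \ref{mainthm} for convergence to an umbilical cap, and conclude $\mathcal{E}(\Sigma_0)\ge\mathcal{E}(\Sigma_\infty)$. The volume-preservation step via the Minkowski identity \eqref{mink-zero} is also exactly the paper's.

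However, your monotonicity step has two concrete problems. First, the paper does not use the weighted functional $\int_\Sigma V_{n+1}\,dA-\cos\theta\int_{T(\Sigma)}V_{n+1}\,d\sigma$; it uses the standard capillary energy $\mathcal{E}(t)=\mathrm{Area}(\Sigma_t)-\cos\theta\,\mathrm{Wet}(\Sigma_t)$, whose first variation under an admissible normal speed $f$ is the classical $\frac{d}{dt}\mathcal{E}(t)=\int_{\Sigma_t}Hf\,dA_t$ (boundary contributions cancel by the capillary condition, as in \cite{RS, GWX}). Your claimed identity $\frac{d}{dt}\mathcal{E}(\Sigma_t)=-\int_{\Sigma_t}fV_{n+1}\,dA$ is not the first variation of either functional: differentiating $\int_\Sigma V_{n+1}\,dA$ under $f\nu$ produces $\int_\Sigma f(\partial_\nu V_{n+1}+V_{n+1}H)\,dA$ plus boundary terms, not $-\int fV_{n+1}\,dA$; and even granting your formula, $\int f\,dA=0$ gives no sign for $-\int fV_{n+1}\,dA$, since it equals $-\int f(V_{n+1}-c)\,dA$ for any constant $c$ and Cauchy--Schwarz is useless against a mean-zero integrand.

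The ingredient you are missing is the \emph{second-order} weighted Minkowski-type identity \eqref{ddsdsd} from \cite[Proposition 4]{CP},
\begin{equation*}
\int_{\Sigma}\Big(\tfrac{1}{x_{n+1}}-\cos\theta\,\bar g(x,\nu)\Big)H\,dA=\frac{2}{n-1}\int_\Sigma\sigma_2\,\bar g(X_{n+1},\nu)\,dA.
\end{equation*}
Substituting \eqref{ffff} into $\int Hf\,dA$ and using this identity turns the derivative of the energy into
\begin{equation*}
\frac{d}{dt}\mathcal{E}(t)=\frac{2n}{n-1}\int_\Sigma\sigma_2\,\bar g(X_{n+1},\nu)\,dA-\int_\Sigma \sigma_1^2\,\bar g(X_{n+1},\nu)\,dA=-\frac{1}{n-1}\int_\Sigma\sum_{i<j}(\kappa_i-\kappa_j)^2\,\bar g(X_{n+1},\nu)\,dA\le0,
\end{equation*}
where the sign is guaranteed by star-shapedness $\bar g(X_{n+1},\nu)>0$. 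This pointwise Newton-type identity, not a global Cauchy--Schwarz argument against the weight $V_{n+1}$, is where the nonpositivity comes from. With that correction your third step goes through unchanged and gives the corollary.
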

In fact, from \cite[Proposition 4.]{CP}, it holds
\begin{equation}\label{ddsdsd}
  \int_{\Sigma}\left(\frac{1}{x_{n+1}}-\cos\theta \bar{g}(x,\nu)\right)H\,dA=\frac{2}{n-1}\int_{\Sigma}\sigma_{2}\bar{g}(X_{n+1},\nu)dA,
\end{equation}
where $\sigma_{2}(\kappa)$ is the second order mean curvature. From the first variation of the enclosed volume functional $\operatorname{Vol}\left(\Omega_{t}\right)$ and the energy functional $\mathcal{E}(t):={Area}(\Sigma_{t})-\cos\theta\, Wet(\Sigma_{t})$, see e.g \cite{RS, GWX}, we have
\begin{equation}\label{zasqq}
\frac{d}{d t} \operatorname{Vol}\left(\Omega_{t}\right)=\int_{\Sigma_{t}} f \,d A_{t}=0
\end{equation}
and
\begin{eqnarray}\label{energy-1}
\frac{d}{d t} \mathcal{E}(t)&=&\int_{\Sigma_{t}} \left(\frac{n}{x_{n+1}}-n\cos\theta \bar{g}(x,\nu)-H\bar{g}(X_{n+1},\nu) \right)H \,d A_{t} \\
&=&-\frac{1}{n-1} \int_{\Sigma_{t}} \sum_{1 \leq i<j \leq n}\left(\kappa_{i}-\kappa_{j}\right)^{2} \bar{g}\left(X_{n+1}, \nu\right) d A_{t}\leq0, \nonumber
\end{eqnarray}
where $\{\kappa_{i}\}_{i=1}^{n}$ are the principal curvatures of $\Sigma_{t}$. That is, along the flow \eqref{mainflow}, the enclosed domain between $\Sigma$ and horosphere $\mathcal{H}$ has fixed volume while its energy is monotone decreasing. Hence Corollary \ref{corollay222} follows directly from Theorem \ref{mainthm}.

\

This paper is organized as follows: In Section \ref{sec2} we reduce the Guan-Li type flow \eqref{mainflow} to scalar parabolic equation over the umbilical hypersurface $\mathbb{S}^{n}_{+}(E_{n+1})$ in a horoball in the hyperbolic space. In Section \ref{sec3} we get the uniform a priori estimates of flow \eqref{mainflow} and prove Theorem \ref{mainthm}.
\section{Scalar flow }\label{sec2}
In this section, we will reduce the flow \eqref{mainflow} to a scalar flow if the initial hypersurface is star-shaped. Under the upper half-space model \eqref{half-space}, we let
\begin{equation*}\label{SEn}
  \mathbb{S}^{n}_{+}(E_{n+1}):=\{x\in\mathbb{R}_{+}^{n+1}\mid|x-E_{n+1}|=1\,\, \text{and}\,\, x_{n+1}>1\}
\end{equation*}
and $\mathbb{S}^{n-1}(E_{n+1}):=\partial\mathbb{S}^{n}_{+}(E_{n+1})$.
Since $\mathcal{H}_{+}\subset\mathbb{R}_{+}^{n+1}$, we use the polar coordinate $(\rho, \beta, \xi) \in[0,+\infty) \times\left[0, \frac{\pi}{2}\right] \times \mathbb{S}^{n-1}(E_{n+1})$, where $\xi$ is the spherical coordinate on $\mathbb{S}^{n-1}(E_{n+1})$ and
\begin{equation}\label{coord}
\begin{cases}{}
\rho^{2}=|x|^{2}+(x_{n+1}-1)^{2}, \\
x_{n+1}=\rho\cos\beta+1,\,\,|x|=\rho\sin\beta.
\end{cases}
\end{equation}
Thus it implies that the standard Euclidean metric has the following expression
\begin{equation}\label{metric}
  \delta_{\mathbb{R}^{n+1}_{+}}=d\rho^{2}+\rho^{2}g_{\mathbb{S}^{n}_{+}(E_{n+1})}=d\rho^{2}+\rho^{2}d\beta^{2}+\rho^{2}\sin^{2}\beta g_{\mathbb{S}^{n}_{+}(E_{n+1})},
\end{equation}
where $g_{\mathbb{S}^{n}_{+}(E_{n+1})}$ is the standard spherical metric on $\mathbb{S}^{n}_{+}(E_{n+1})$.

A hypersurface $\Sigma \subset\left(\mathcal{H}_{+}, \bar{g}\right)$ is star-shaped with respect to $E_{n+1}$, i.e., $\bar{g}(X_{n+1}, \nu)>0$ on $\Sigma$.  We can view $\bar{\Sigma}:=\overline{x(M)}$ as a radial graph over $\bar{\mathbb{S}}_{+}^{n}(E_{n+1})$. Therefore, $\Sigma$ can be written by
\begin{equation}\label{graph}
 X_{n+1}:=x-E_{n+1}=\rho(z)z=\rho(\beta,\xi)z,\quad z:=(\beta,\xi)\in\mathbb{S}^{n}_{+}(E_{n+1}).
\end{equation}
In polar coordinate, a direct computation implies that
\begin{equation}\label{Ennn}
  \frac{\partial}{\partial x_{n+1}}=\frac{\partial\rho}{\partial x_{n+1}}\partial_{\rho}+\frac{\partial\beta}{\partial x_{n+1}}\partial_{\beta}=\cos\beta\partial_{\rho}-\frac{\sin\beta}{\rho}\partial_{\beta}.
\end{equation}
 Set $\omega:=-\log x_{n+1}=-\log(\rho\cos\beta+1)$ and $u:=\log\rho$ and $v:=\sqrt{1+|\nabla u|^{2}}$. It is well-known that ${\nu}_{\delta}=\frac{\partial_{\rho}-\rho^{-1} \nabla u}{v}$ is the unit outward normal vector of ${\Sigma}$ in $\left(\mathbb{R}_{+}^{n+1}, \delta_{\mathbb{R}_{+}^{n+1}}\right).$ Then the capillary boundary condition in \eqref{mainflow} gives us that
\begin{equation}\label{boundary1}
-\cos\theta=\bar{g}(\nu,\bar{N}\circ x)=-\langle{\nu}_{\delta},E_{n+1}\rangle=\left\langle\frac{\partial_{\rho}-\rho^{-1} \nabla u}{v},\,\frac{1}{\rho}\partial_{\beta}\right\rangle=-\frac{1}{v}\nabla_{\beta}u.
\end{equation}
It follows that
\begin{equation}\label{bd2}
  \nabla_{\partial_{\beta}} u=\cos \theta v \quad \text { on } \partial{\mathbb{S}}_{+}^{n}(E_{n+1}).
\end{equation}
By a straightforward computation as above, we have
\begin{eqnarray}\label{bd3}
\qquad\bar{g}(X_{n+1},\nu)=\bar{g}(x-E_{n+1},\nu)=\frac{1}{x_{n+1}}\left\langle\rho\partial_{\rho},\nu_{\delta}\right\rangle=\frac{1}{x_{n+1}}\left\langle\rho\partial_{\rho},\frac{\partial_{\rho}-\rho^{-1} \nabla u}{v}\right\rangle=\frac{1}{v}\rho e^{\omega}
\end{eqnarray}
and
\begin{eqnarray}\label{bd4}
\bar{g}(E_{n+1},\nu)&=&\frac{1}{x_{n+1}}\left\langle E_{n+1},\nu_{\delta}\right\rangle\\
&=&\frac{1}{x_{n+1}}\left\langle\cos\beta\partial_{\rho}-\frac{\sin\beta}{\rho}\partial_{\beta},\frac{\partial_{\rho}-\rho^{-1} \nabla u}{v}\right\rangle\nonumber\\
&=&\frac{1}{v}e^{\omega}(\cos\beta+\sin\beta\nabla_{\beta}u).\nonumber
\end{eqnarray}
Combining \eqref{bd3} and \eqref{bd4}, we get
\begin{eqnarray}\label{bd5}
\bar{g}(x,\nu)=\bar{g}(X_{n+1},\nu)+\bar{g}(E_{n+1},\nu)=\frac{1}{v}e^{\omega}(\rho+\cos\beta+\sin\beta\nabla_{\beta}u).
\end{eqnarray}
Recall that $e^{-\omega}=x_{n+1}=\rho\cos\beta+1$. Then it follows that
\begin{eqnarray}\label{bd6}
D_{\nu_{\delta}}e^{-\omega}&=&\langle De^{-\omega},\nu_{\delta}\rangle=\langle D_{\rho}e^{-\omega}\partial_{\rho}+(D_{\rho^{-1}\partial_{\beta}}e^{-\omega})(\rho^{-1}\partial_{\beta}),\nu_{\delta}\rangle\\
&=&\left\langle \cos\beta\partial_{\rho}-\rho^{-1}\sin\beta\partial_{\beta},\frac{\partial_{\rho}-\rho^{-1}\nabla u}{v}\right\rangle\nonumber\\
&=&\frac{1}{v}(\cos\beta+\sin\beta\nabla_{\beta}u).\nonumber
\end{eqnarray}

Let $H_{\delta}$ be a mean curvature with respect to ${\nu}_{\delta}$ of ${\Sigma}$ in $\left(\mathbb{R}_{+}^{n+1}, \delta_{\mathbb{R}_{+}^{n+1}}\right)$.  Then $H$ and $H_{\delta}$ have the following relations
\begin{eqnarray}
{H} &=&e^{-\omega}\left(H_{\delta}+n D_{{\nu}_{\delta}} \omega\right) \\
&=&e^{-\omega}\left(\frac{n}{\rho v}-\frac{1}{\rho v} \sum_{i, j=1}^{n}\left(\sigma^{i j}-\frac{u^{i} u^{j}}{v^{2}}\right) u_{i j}\right)-n D_{\nu_{\delta}} e^{-\omega}\nonumber \\
&=&e^{-\omega}\left(\frac{n}{\rho v}-\frac{1}{\rho v} \sum_{i, j=1}^{n}\left(\sigma^{i j}-\frac{u^{i} u^{j}}{v^{2}}\right) u_{i j}\right)-\frac{n}{v}(\cos \beta+\sin \beta \nabla_{\beta}u) \nonumber\\
&=&-\frac{1}{\rho v e^{\omega}} \sum_{i, j=1}^{n}\left(\sigma^{i j}-\frac{u^{i} u^{j}}{v^{2}}\right) u_{i j}+\frac{n}{\rho v}-\frac{n\sin\beta}{v}\nabla_{\beta}u.\nonumber
\end{eqnarray}
 We know that the first equation in flow \eqref{mainflow} is reduced to the following scalar equation
\begin{equation}\label{scal}
  \frac{\partial\rho}{\partial t}=\frac{v}{e^{w}}f,
\end{equation}
where
\begin{eqnarray}
f&=&\frac{n}{x_{n+1}}-n\cos\theta\bar{g}(x,\nu)-H \bar{g}\left(X_{n+1}, \nu\right)\\
&=&ne^{\omega}-\frac{n\cos\theta}{v}e^{\omega}(\rho+\cos\beta+\sin\beta\nabla_{\beta}u)\nonumber\\
&&+\frac{\rho e^{\omega}}{v}\left(\frac{1}{e^{\omega}\rho v} \sum_{i, j=1}^{n}\left(\sigma^{i j}-\frac{u^{i} u^{j}}{v^{2}}\right) u_{i j}-\frac{n}{\rho v}+\frac{n\sin\beta}{v}\nabla_{\beta}u
\right).\nonumber
\end{eqnarray}
Note that $u=\log\rho$ and $v^{2}=1+|\nabla u|^{2}$. Hence \eqref{scal} is also equivalent to
\begin{eqnarray}\label{inter-flow}
\frac{\partial u}{\partial t}&=&\frac{v}{\rho e^{\omega}}f\\
&=&\frac{1}{\rho e^{\omega}v} \sum_{i, j=1}^{n}\left(\sigma^{i j}-\frac{u^{i} u^{j}}{v^{2}}\right) u_{i j}+\frac{n}{\rho v}|\nabla u|^{2}+\frac{n\sin\beta}{v}\nabla_{\beta}u\nonumber\\
&&-\frac{n\cos\theta}{\rho}(\rho+\cos\beta+\sin\beta\nabla_{\beta}u)\nonumber\\
&:=&G(\nabla^{2}u,\nabla u,\rho,\beta).\nonumber
\end{eqnarray}
In summary, combining \eqref{inter-flow} and \eqref{boundary1}, the flow $\eqref{mainflow}$ is equivalent to the following scalar parabolic equation on $\mathbb{S}^{n}_{+}(E_{n+1})$
\begin{equation}\label{scalflow}
\begin{cases}{}
u_{t}=G(\nabla^{2}u,\nabla u,\rho,\beta)& \text{in} \,\,\,\mathbb{S}^{n}_{+}(E_{n+1})\times[0,T), \\
\nabla_{\beta}u=\cos\theta\sqrt{1+|\nabla u|^{2}} &\text{on}\,\,\,\partial\mathbb{S}^{n}_{+}(E_{n+1})\times[0,T),\\
u(\cdot,0)=u_{0}&\text{on}\,\,\,\mathbb{S}^{n}_{+}(E_{n+1}),
\end{cases}
\end{equation}
where $u_{0}:=\log\rho_{0}$, $\rho_{0}$ is only related to the initial hypersurface $x_{0}(M)$ and $G$ is defined by \eqref{inter-flow}.

\section{A priori estimates and convergence}\label{sec3}
The short time existence of the flow $\eqref{mainflow}$ is established by the standard PDE theory, due to our assumption of star-shaped (i.e. $\bar{g}(X_{n+1}, \nu)>0$) for the initial hypersurface $x_{0}(M)$, this flow is transformed into the scalar flow \eqref{scalflow}, which is uniformly parabolic.

In the following we will show the uniform height and gradient estimates for the equation \eqref{scalflow}, then the uniform $C^{\infty}$ estimates and the long time existence of solution to flow follow from the standard parabolic PDE theory. Now we prove that the radial function $u$ has $C^0$ estimate.
\begin{prop}\label{prop3.1}
Assume that the initial star-shaped hypersurface $x_0(M)\subseteq\mathcal{H}_{+}$ satisfies
\begin{equation*}
  x_0(M) \widehat{\subset {C}_{\theta, R_2}(E_{n+1})} \backslash \widehat{{C}_{\theta, R_1}(E_{n+1})},
\end{equation*}
for some $R_2>R_1>0$, where $\widehat{{C}_{\theta, r}(E_{n+1})}$ is defined by \eqref{qaaq}. Then this property is preserved along flow \eqref{mainflow}. In particular, if $u(x, t)$ solves \eqref{scalflow}, then
\begin{equation}\label{kiisq}
  \|u\|_{C^0\left(\mathbb{S}_{+}^n(E_{n+1}) \times[0, T)\right)} \leq C,
\end{equation}
where $C$ is a constant only depending on the initial value.
\end{prop}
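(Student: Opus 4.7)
The plan is to exhibit the family $\{C_{\theta,r}(E_{n+1})\}_{r>0}$ as a foliation of stationary solutions of the flow, and then use a comparison/avoidance principle to trap the evolving hypersurface between two such leaves. First, I verify that each umbilical hypersurface $C_{\theta,r}(E_{n+1})$ is a stationary solution of \eqref{mainflow}. Indeed, for $C_{\theta,r}(E_{n+1})$ the principal curvatures are all equal to $\kappa=\tfrac1r-\cos\theta$, so $H=n\kappa$. Multiplying the identity in Proposition \ref{propsjjjsj} by $n$ gives
\begin{equation*}
\frac{n}{x_{n+1}}-n\cos\theta\,\bar g(x,\nu)=n\kappa\,\bar g(X_{n+1},\nu)=H\,\bar g(X_{n+1},\nu),
\end{equation*}
hence $f\equiv 0$ on $C_{\theta,r}(E_{n+1})$, and the capillary condition is satisfied by construction. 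Thus each $C_{\theta,r}(E_{n+1})$ is stationary.

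Next I reformulate the inclusion as a pointwise inequality for the radial graph function. An elementary computation solving $|x-(1-r\cos\theta)E_{n+1}|=r$ in the polar coordinates $(\rho,\beta,\xi)$ centered at $E_{n+1}$ shows that $C_{\theta,r}(E_{n+1})$ is the radial graph of
\begin{equation*}
\phi_r(\beta):=r\bigl(-\cos\theta\cos\beta+\sqrt{1-\cos^2\theta\sin^2\beta}\bigr),\qquad \beta\in[0,\pi/2],
\end{equation*}
over $\mathbb S^n_+(E_{n+1})$, which is strictly increasing in $r$. Setting $u_r:=\log\phi_r$, the two barriers become stationary solutions $u_{R_1}$, $u_{R_2}$ of the scalar equation \eqref{scalflow}, and the geometric containment $x_0(M)\widehat{\subset {C}_{\theta,R_2}(E_{n+1})}\setminus\widehat{{C}_{\theta,R_1}(E_{n+1})}$ translates into $u_{R_1}\le u(\cdot,0)\le u_{R_2}$ on $\mathbb S^n_+(E_{n+1})$. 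The goal is then to prove these inequalities persist for all $t\in[0,T)$, from which \eqref{kiisq} follows immediately.

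I would now apply the parabolic maximum principle to $w:=u-u_{R_2}$ (and symmetrically to $u_{R_1}-u$). Since $G$ in \eqref{inter-flow} is a smooth uniformly elliptic quasilinear operator (on the set where $\nabla u$ is bounded, which we ensure locally), subtracting the equations satisfied by $u$ and $u_{R_2}$ and linearizing along the segment between $(\nabla^2 u,\nabla u)$ and $(\nabla^2 u_{R_2},\nabla u_{R_2})$ yields a linear parabolic inequality for $w$. Suppose by contradiction $\max w>0$ is attained at some $(z_0,t_0)$ with $t_0>0$. If $z_0\in\mathrm{int}\,\mathbb S^n_+(E_{n+1})$, the standard strong maximum principle rules this out. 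The delicate case, which is the main obstacle, is when $z_0\in\partial\mathbb S^n_+(E_{n+1})$: both $u$ and $u_{R_2}$ satisfy the oblique, nonlinear capillary condition $B(u,\nabla u):=\nabla_\beta u-\cos\theta\sqrt{1+|\nabla u|^2}=0$. Writing $B(u,\nabla u)-B(u_{R_2},\nabla u_{R_2})=0$ and using the mean value theorem gives a linear oblique condition $\partial_\beta w=\langle a,\nabla w\rangle$ with $a$ tangential to $\partial\mathbb S^n_+(E_{n+1})$; since $\partial_\beta$ is strictly transverse inward, the Hopf boundary lemma applied to the linearized parabolic inequality forces $\partial_\beta w(z_0,t_0)>0$, contradicting the tangential structure of the oblique condition at an interior boundary maximum of $w$. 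This establishes $u\le u_{R_2}$, and the reverse bound $u\ge u_{R_1}$ is identical, completing the proof.
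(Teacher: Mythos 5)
Your proposal is correct and follows essentially the same route as the paper: you verify via Proposition \ref{propsjjjsj} that each $C_{\theta,r}(E_{n+1})$ is a static solution of \eqref{mainflow} and then trap the evolving graph between the two barrier leaves $u_{R_1}\le u\le u_{R_2}$ by a parabolic comparison/avoidance principle, which the paper simply cites from Wang--Weng rather than proving. The one slip is your claim that the mean-value vector $a$ in the linearized capillary condition is tangential to $\partial\mathbb{S}^n_{+}(E_{n+1})$ --- it need not be, but since $|\cos\theta\, a|<1$ the combined boundary operator remains strictly oblique, so the Hopf-lemma contradiction at a boundary maximum goes through unchanged.
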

\begin{proof}
From Proposition \ref{propsjjjsj}, we know that the umbilical hypersurface $C_{\theta,r}(E_{n+1})$ satisfies
\begin{equation}\label{static-1}
 \left(\frac{n}{x_{n+1}}-n\cos\theta \bar{g}(x,\nu)\right)-H\bar{g}(x-E_{n+1}, \nu)=0,
\end{equation}
that is, it is a static solution to flow \eqref{mainflow} for each $r>0$. Thus the assertion follows from the avoidance principle for the strictly parabolic equation with $\theta$-capillary boundary condition (see e.g \cite[Proposition 4.2]{WW}).
\end{proof}
For the convenience of notation, we use $\sigma$ to denote the metric of $\mathbb{S}_{+}^n(E_{n+1})$, i.e. ${g}_{\mathbb{S}_{+}^n(E_{n+1})}$ and we write $O(s)$ to be the terms that are bounded by $C s$ for some positive constant $C$, which depends only on the $C^0$ norm of $u$.

We let the distance function $d(x):=$ $\operatorname{dist}\left(x, \partial \mathbb{S}_{+}^n(E_{n+1})\right)$ on $\mathbb{S}_{+}^n(E_{n+1})$ for the metric $\sigma$. Thus $d$ is smooth well-defined function for $x$ near $\partial \mathbb{S}_{+}^n(E_{n+1})$ and $\nabla d=-\partial_\beta$ on $\partial \mathbb{S}_{+}^n(E_{n+1})$, where $\partial_\beta$ is the outward unit normal vector of $\partial \mathbb{S}_{+}^n(E_{n+1})$ in $\mathbb{S}_{+}^n(E_{n+1})$. Extending $d$ to be a smooth function defined in the compact hypersurface $\overline{\mathbb{S}}_{+}^n(E_{n+1})$, then it satisfies
\begin{equation}\label{ddddd}
  d \geq 0, \quad|\nabla d| \leq 1, \quad|\nabla^{2} d| \leq C, \quad\text { in } \overline{\mathbb{S}}_{+}^n(E_{n+1}).
\end{equation}

In order to prove the $C^{1}$ estimate, we next choose an auxiliary function $\Phi$ that had been used in \cite[Proposition 4.3]{WW} to obtain the uniform gradient estimate for the flow \eqref{scalflow}.

\begin{prop}\label{proposition3.2}
Assume $u: \mathbb{S}_{+}^{n}(E_{n+1}) \times[0, T) \rightarrow \mathbb{R}$ solves \eqref{scalflow} and $|\cos\theta|<\frac{3n+1}{5n-1}$, then
\begin{equation}\label{C1}
  |\nabla u|(x, t) \leq C,
\end{equation}
for any $(x, t) \in \mathbb{S}_{+}^{n}(E_{n+1}) \times[0, T)$, where $C$ is a positive constant depends only on the initial value.
\end{prop}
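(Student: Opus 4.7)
The plan is a maximum principle argument applied to an auxiliary function combining $\log v$ (where $v = \sqrt{1+|\nabla u|^{2}}$) with a correction involving the boundary distance function $d$ introduced right before the proposition, in the spirit of \cite[Proposition 4.3]{WW}. Since Proposition \ref{prop3.1} already gives $\|u\|_{C^{0}}\le C$, the support of $u$ is fixed, so the coefficients of the quasilinear operator
\[
G(\nabla^{2}u,\nabla u,\rho,\beta)=\frac{1}{\rho e^{\omega}v}\sum_{i,j=1}^{n}\Bigl(\sigma^{ij}-\frac{u^{i}u^{j}}{v^{2}}\Bigr)u_{ij}+\text{(lower order)}
\]
are smooth and uniformly elliptic once $v$ is controlled. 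The idea is to choose constants $\alpha,\beta>0$ and define
\[
\Phi:=\log v+\alpha\,\eta(d)+\beta\,u,
\]
with $\eta$ a smooth decreasing cut-off of $d$, and to show $\Phi$ cannot attain arbitrarily large values on $\overline{\mathbb{S}}_{+}^{n}(E_{n+1})\times[0,T)$.

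The first step is the interior case: at a space-time maximum $(x_{0},t_{0})$ with $x_{0}\in\operatorname{int}\mathbb{S}_{+}^{n}(E_{n+1})$, one has $\nabla\Phi=0$ and $(\partial_{t}-a^{ij}\nabla_{i}\nabla_{j})\Phi\ge 0$, where $a^{ij}=\frac{1}{\rho e^{\omega}v}(\sigma^{ij}-u^{i}u^{j}/v^{2})$ is the linearization of $G$ in the Hessian. Differentiating \eqref{scalflow} and using the Bochner identity on $\mathbb{S}_{+}^{n}(E_{n+1})$, the evolution of $\log v$ produces a leading negative term of order $-|\nabla^{2}u|_{a}^{2}/v^{2}$ plus lower order terms that grow at most linearly in $v$. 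The contribution of $\alpha\eta(d)$ is harmless in the interior, and the term $\beta u$ provides a strictly positive term of order $\beta v^{2}$ through the good sign of $a^{ij}u_{i}u_{j}$; choosing $\beta$ large enough absorbs the linear terms and forces $v(x_{0},t_{0})\le C$.

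The main obstacle, and the place where the restriction $|\cos\theta|<\frac{3n+1}{5n-1}$ is used, is the boundary case $x_{0}\in\partial\mathbb{S}_{+}^{n}(E_{n+1})$. Here one can only say $\nabla_{\beta}\Phi(x_{0},t_{0})\ge 0$. Differentiating the capillary condition $u_{\beta}=\cos\theta\,v$ in tangential directions and using $\nabla d=-\partial_{\beta}$, one expresses $\nabla_{\beta}\log v$ in terms of the tangential gradient $\nabla'u$, the second fundamental form of $\partial\mathbb{S}_{+}^{n}(E_{n+1})\subset\mathbb{S}_{+}^{n}(E_{n+1})$, and the tangential Hessian of $u$. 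Substituting into $\nabla_{\beta}\Phi$ and using the tangential critical conditions coming from the maximum, one is led to an inequality of the shape
\[
0\le\nabla_{\beta}\Phi(x_{0},t_{0})\le -Q\bigl(|\nabla' u|/v\bigr)\log v+C-\alpha\,\eta'(0),
\]
where $Q$ is a quadratic form in $|\nabla' u|/v$ whose coefficients depend algebraically on $\cos\theta$ and on the dimension $n$. A careful computation, carried out in \cite[\S 4]{WW} in the ball setting and adaptable here thanks to the conformal flatness of the horoball metric, shows that $Q$ is uniformly positive precisely when $|\cos\theta|<\frac{3n+1}{5n-1}$; choosing $\alpha$ large then yields $v(x_{0},t_{0})\le C$. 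Combining both cases proves the bound \eqref{C1}.
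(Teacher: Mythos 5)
Your proposal misplaces where the angle restriction $|\cos\theta|<\tfrac{3n+1}{5n-1}$ actually enters, and this reflects a structural misunderstanding of the argument. In the paper (following Wang--Weng), the auxiliary function is
\[
\Phi=(1+Kd)\,v+\cos\theta\,\sigma(\nabla u,\nabla d),
\]
and the \emph{boundary} case is disposed of for \emph{all} $\theta\in(0,\pi)$ simply by choosing $K$ large: the particular term $\cos\theta\,\sigma(\nabla u,\nabla d)$ is tailored so that differentiating the capillary condition $u_\beta=\cos\theta\,v$ and plugging in $\nabla d=-\partial_\beta$ makes $\nabla_\beta\Phi$ strictly negative at a boundary maximum once $K$ is large, independently of the size of $|\cos\theta|$. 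The angle restriction then appears only in the \emph{interior} case, where after using $\nabla\Phi=0$ to express $u_{11}$ and $u_{1\alpha}$ in terms of $\sum_\alpha u_{\alpha\alpha}$, the good negative term $-c_0 u_1^2$ emerges from the combination of the first-order reaction terms $(I_{12}+I_{22}+I_{32})+(I_{13}+I_{23})$ only when one can pick $a_0\in(|\cos\theta|,\tfrac{3n+1}{5n-1})$ and $\varepsilon_0=\tfrac{3n+1-a_0(5n-1)}{4n(1-a_0)}\in(0,1)$. Your proposal asserts the opposite: that the interior case is handled easily and the boundary case is where the threshold arises.

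There is also a concrete error in your claimed interior mechanism. You write that the term $\beta u$ in your $\Phi$ contributes ``a strictly positive term of order $\beta v^2$ through the good sign of $a^{ij}u_iu_j$.'' But with
\[
a^{ij}=\frac{1}{\rho e^{\omega}v}\Bigl(\sigma^{ij}-\frac{u^i u^j}{v^2}\Bigr),
\]
one has $a^{ij}u_iu_j=\frac{1}{\rho e^{\omega}v}\bigl(|\nabla u|^2-|\nabla u|^4/v^2\bigr)=\frac{|\nabla u|^2}{\rho e^{\omega}v^3}=O(1/v)$, which degenerates as $v\to\infty$ rather than growing like $v^2$. Moreover $\partial_t(\beta u)=\beta G$ itself contains reaction terms of size $O(v)$ (e.g.\ $\tfrac{n|\nabla u|^2}{\rho v}$), so $\beta u$ does not produce a dominating good term. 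Without a correct interior mechanism that produces the decisive $-c_0u_1^2$, and with the angle restriction attributed to the wrong case, the proof as outlined does not close.
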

\begin{proof}Define the function
\begin{equation}\label{pjs}
  \Phi:=(1+Kd) v+\cos \theta \sigma(\nabla u, \nabla d),
\end{equation}
where $K$ is a positive constant to be determined later. For any $T^{\prime}<T$, assume $\Phi$ attains its maximum value at some point $\left(x_0, t_0\right) \in \bar{\mathbb{S}}_{+}^n(E_{n+1}) \times\left[0, T^{\prime}\right]$.

Following the same ideas in \cite[Proposition 4.3, Case 1]{WW}, by choosing $K>0$ sufficiently large, $\Phi$ does not attach its maximum value on $\partial \mathbb{S}_{+}^n(E_{n+1})$, hence we have either

$(i)$ \,\,$(x_{0},t_{0})\in\mathbb{S}_{+}^n(E_{n+1})\times\{0\}$ or

$(ii)$ $(x_{0},t_{0})\in\mathbb{S}_{+}^n(E_{n+1})\times(0,T']$.

When case $(i)$ happen, we can see that
\begin{equation*}
  \sup _{\overline{\mathbb{S}}_{+}^n(E_{n+1}) \times\left[0, T^{\prime}\right]}|\nabla u| \leq C,
\end{equation*}
where $C$ is a positive constant depending only on $n$ and $u_0$.

When case $(ii)$ happen, we choose the geodesic coordinate $\left\{\frac{\partial}{\partial x_i}\right\}_{i=1}^n$ at $x_0$ and assume
$|\nabla u|=u_1>0$, $\left\{u_{\alpha \beta}\right\}_{2 \leq \alpha, \beta \leq n}$ is diagonal. Next all the computations are deduced at the point $\left(x_0, t_0\right)$. From \eqref{pjs}
$$
0=\nabla_i \Phi=(1+K d) v_i+K d_i v+\cos \theta\left(u_{l i} d_l+u_l d_{l i}\right), \quad\text{for any}\,\, \, 1 \leq i \leq n,
$$
it follows that
\begin{equation}
  \left[(1+K d) \frac{u_1}{v}+\cos \theta d_1\right] u_{11}=-\cos \theta u_{\alpha 1} d_\alpha-\cos \theta u_1 d_{11}-K d_1 v,
\end{equation}
and
\begin{equation}
  \left[(1+Kd) \frac{u_1}{v}+\cos \theta d_1\right] u_{1 \alpha}=-\cos \theta u_{\alpha \alpha} d_\alpha-\cos \theta u_1 d_{1 \alpha}-Kd_\alpha v,\quad \text{for any}\,\, \, 2\leq\alpha\leq n.
\end{equation}

Suppose $u_{1}(x_{0},t_{0}) \geq \delta_{0}$ for some constant $\delta_{0}>0$, otherwise, we have done. For the convenience, we denote $S:=(1+Kd) \frac{u_1}{v}+\cos \theta d_1$, then $0<C(\delta_{0}, \theta) \leq S \leq 2+K$ by \eqref{ddddd}. Hence
\begin{eqnarray}\label{hsuhaushaq2}
u_{11} &=&-\frac{1}{S} \cos \theta u_{\alpha 1} d_\alpha-\frac{1}{S}\left(\cos \theta u_1 d_{11}+K d_1 v\right) \\
&=&\frac{\cos ^2 \theta}{S^2} \sum_{\alpha=2}^n d_\alpha^2 u_{\alpha \alpha}+O(v),\nonumber
\end{eqnarray}
and
\begin{eqnarray}\label{hsuhaushaq}
u_{1 \alpha} & =&-\frac{\cos \theta d_\alpha}{S} u_{\alpha \alpha}-\frac{1}{S}\left(\cos \theta u_1 d_{1 \alpha}+K d_\alpha v\right) \\
& =&-\frac{\cos \theta d_\alpha}{S} u_{\alpha \alpha}+O(v),\quad \text{for any}\,\,\, 2 \leq \alpha \leq n.\nonumber
\end{eqnarray}
Set $G:=G(\nabla^{2} u, \nabla u=p, \rho, \beta)$ and $u_{\beta}:=\sigma(\nabla u,\partial_{\beta})$. By a direct computation we see
\begin{eqnarray}
&&\,\, G^{i j}:=\frac{\partial G}{\partial u_{i j}} =\frac{1}{\rho ve^{\omega}}\left(\sigma^{i j}-\frac{u^{i} u^{j}}{v^{2}}\right),\label{Gij}\\
&&\,\,G_{p_{i}}:=\frac{\partial G}{\partial u_{i}}=-\frac{u_{k}}{\rho e^{\omega}v^{3}}\sum_{i,j}\sigma^{ij}u_{ij}+\frac{3u_{k}}{\rho e^{\omega}v^{5}}\sum_{i,j}u_{ij}u_{i}u_{j}-\frac{2}{\rho e^{\omega}v^{3}}\sum_{i,j}u_{ik}u_{i}\\
&&\qquad\quad\quad+\left(\frac{2n}{\rho v}-\frac{n|\nabla u|^{2}}{\rho v^{3}}-\frac{n\sin\beta \nabla_{\beta}u}{v^{3}}\right)u_{k}+n\sin\beta\sigma(\partial_{k},\partial_{\beta})\left(\frac{1}{v}-\frac{\cos\theta}{\rho}\right),\nonumber\\
&&\,\,G_{\rho}:=\frac{\partial G}{\partial \rho}=-\frac{1}{\rho^{2}v}\sum_{i,j}\left(\sigma^{i j}-\frac{u^{i} u^{j}}{v^{2}}\right)u_{ij}-\frac{n}{\rho^{2}v}|\nabla u|^{2}+\frac{n\cos\theta}{\rho^{2}}(\cos\beta+\sin\beta\nabla_{\beta}u),\label{GFrou}\\
&&\,\,G_{\beta}:=\frac{\partial G}{\partial \beta}=-\frac{\sin\beta}{v}\sum_{i,j}\left(\sigma^{i j}-\frac{u^{i} u^{j}}{v^{2}}\right)u_{ij}+\frac{n\cos\beta}{v}\nabla_{\beta}u+\frac{n\cos\theta}{\rho}(\sin\beta-\cos\beta\nabla_{\beta}u).\label{GFb}
 \end{eqnarray}
By differentiating the first equation in \eqref{scalflow}, we get
\begin{equation}\label{utl}
  u_{t l}=G^{i j} u_{i j l}+G_{p_{i}} u_{i l}+G_\rho \rho u_l+G_\beta \sigma\left(\partial_\beta, \partial_l\right).
\end{equation}
Using the Ricci identity on $\mathbb{S}_{+}^n(E_{n+1})$, we have
\begin{equation}\label{sssxsa}
u_{i j l}=u_{l i j}+u_j \sigma_{l i}-u_l \sigma_{i j}.
\end{equation}
It follows that
\begin{equation}\label{sxsq}
u_{t l}=G^{i j} u_{l i j}+G^{i j} u_j \sigma_{li}-\sum_{i=1}^n G^{i i} u_l+G_{p_i} u_{i l}+\rho G_\rho u_{l}+G_{\beta} \sigma\left(\partial_\beta, \partial_l\right) .
\end{equation}
Now we define a linearized operator as
\begin{equation}\label{sxxzzaa}
\mathcal{L}:=\partial_{t}-G^{i j} \nabla_{i j}-G_{p_i} \nabla_i.
\end{equation}
Hence at $(x_{0},t_{0})$ we have
\begin{eqnarray}
\qquad\,\,\,\,0 \leq \mathcal{L}\Phi &= &\frac{(1+Kd)}{v} u_l\left(u_{l t}-G^{i j} u_{i j j}-G_{p_i} u_{l i}\right)+d_k \cos \theta\left(u_{k t}-G^{i j} u_{k i j}-G_{p_i} u_{k i}\right) \label{summary888}\\
&&+(1+K d)\left(\frac{G^{i j} u_l u_{l i} u_k u_{k j}}{v^3}-\frac{G^{i j} u_{l i} u_{l j}}{v}\right)-\left(2 \cos \theta G^{i j} u_{k i} d_{k j} +2K G^{i j} d_i v_j\right) \nonumber\\
&& -\left(\cos \theta G^{i j} u_k d_{k i j}+K v G^{i j} d_{i j}\right)-G_{p_i}\left(K v d_i +\cos \theta u_k d_{k i}\right) \nonumber\\
&:= & I_1+I_2+I_3+I_4+I_5+I_6 .\nonumber
\end{eqnarray}
In the following we will calculate above six terms $I_{1}-I_{6}$ one by one.

Firstly, we deal with the term $I_{1}$. By \eqref{sxsq} we have
\begin{eqnarray}\label{equ-nonc-111111ww}
\quad\quad\,\, I_1&=& \frac{(1+K d)}{v} u_l\left(u_{l t}-G^{i j} u_{l i j}-G_{p_i} u_{i l}\right) \\
&=& \frac{(1+Kd)}{v} G^{i j} u_j u_l \sigma_{l i}+\frac{(1+K d)}{v}\left(\rho G_\rho |\nabla u|^{2}+G_\beta u_{\beta}\right)-\frac{(1+Kd)|\nabla u|^2}{v} \sum_{i=1}^n G^{i i}. \nonumber
\end{eqnarray}
Inserting \eqref{Gij}-\eqref{GFb} into \eqref{equ-nonc-111111ww} to obtain
\begin{eqnarray}\label{equ-nonc-1111}
\quad\quad\,\, I_1
&=&\left[-\frac{(1+Kd)}{\rho v^{4}}|\nabla u|^{2}u_{11}-\frac{(1+Kd)}{v^{4}}\sin\beta u_{\beta}u_{11}\right]-\frac{(1+Kd)}{\rho v^{2}}|\nabla u|^{2}\sum_{\alpha=2}^{n}u_{\alpha\alpha} \nonumber\\
&&+\left[\frac{n(1+Kd)}{\rho v}|\nabla u|^{2}\cos\theta\sin\beta u_{\beta}-\frac{n(1+Kd)}{\rho v^{2}}|\nabla u|^{2}u_{1}^{2}\right] \nonumber\\
&&+\left[\frac{(1-n)(1+Kd)}{\rho v^{2}e^{\omega}}|\nabla u|^{2}+\frac{n(1+Kd)}{\rho v}\cos\theta(\sin\beta-\cos\beta u_{\beta})u_{\beta}\right. \nonumber\\
&&\left.-\frac{(1+Kd)}{v^{2}}\sin\beta u_{\beta}\sum_{\alpha=2}^{n}u_{\alpha\alpha}+\frac{n(1+Kd)}{v^{2}}\cos\beta u^{2}_{\beta}+\frac{n(1+Kd)}{\rho v}|\nabla u|^{2}\cos\theta\cos\beta \nonumber
\right]\nonumber\\
&:=&I_{11}+I_{12}+I_{13}+I_{14}, \nonumber
\end{eqnarray}
From \eqref{hsuhaushaq2}, we obtain
\begin{eqnarray}\label{equ-nonc-1qq}
I_{11}&=&-\frac{(1+Kd)}{\rho v^{4}}|\nabla u|^{2}u_{11}-\frac{(1+Kd)}{v^{4}}\sin\beta u_{\beta}u_{11}\\
&=&-\left(\frac{(1+Kd)}{\rho v^{4}}|\nabla u|^{2}+\frac{(1+Kd)}{v^{4}}\sin\beta u_{\beta}\right)\left(\frac{\cos^{2}\theta}{S^{2}}\sum_{\alpha=2}^{n}d^{2}_{\alpha}u_{\alpha\alpha}+O(v)\right)\nonumber\\
&\leq&O(\frac{1}{v^{2}})\sum_{\alpha=2}^{n}|u_{\alpha\alpha}|+O(\frac{1}{v})\nonumber
\end{eqnarray}
and
\begin{equation}\label{asaas}
  I_{14}=O(\frac{1}{v})\sum_{\alpha=2}^{n}|u_{\alpha\alpha}|+O(v).
\end{equation}
Similar to $I_{1}$, we can also calculate the term $I_2$ by \eqref{sxsq} and \eqref{Gij}-\eqref{GFb} to get
\begin{eqnarray}\label{equ-nonc-1qqaa}
\quad I_{2}&=&d_k \cos \theta\left(u_{k t}-G^{i j} u_{k i j}-G_{p_i} u_{k i}\right) \\
&=&\cos\theta\sigma(\nabla u, \nabla d) \rho G_\rho+\cos \theta G_\beta d_\beta+\cos\theta G^{ij}u_{j}\sigma_{ik}d_{k}-\cos\theta\sigma(\nabla u, \nabla d)\sum_{i=1}^{n}G^{ii}\nonumber\\
&=&-\left[\cos\theta\sigma(\nabla u, \nabla d)\frac{1}{\rho v^{3}}+\cos\theta\frac{\sin\beta}{v^{3}}d_{\beta}\right]u_{11}-\frac{1}{\rho v}\cos\theta\sigma(\nabla u, \nabla d)\sum_{\alpha=2}^{n}u_{\alpha\alpha}\nonumber\\
&&+\cos\theta\sigma(\nabla u,\nabla d)\left(\frac{n\cos\theta\sin\beta}{\rho} u_{\beta}-\frac{n}{\rho v}|\nabla u|^{2}\right)\nonumber\\
&&+\left[-\cos\theta d_{\beta}\frac{\sin\beta}{v}\sum_{\alpha=2}^{n}u_{\alpha\alpha}+\cos\theta\sigma(\nabla u, \nabla d)\frac{(1-n)}{\rho ve^{\omega}}\right.\nonumber\\
&&\quad+\left.\cos\theta d_{\beta}\frac{n\cos\beta}{v}u_{\beta}+\frac{n\cos^{2}\theta}{\rho}\left(\sin\beta d_{\beta}-\cos\beta u_{\beta}d_{\beta}+\cos\beta\sigma(\nabla u,\nabla d)\right)\right]\nonumber\\
&:=&I_{21}+I_{22}+I_{23}+I_{24}.\nonumber
\end{eqnarray}
For the term $I_{21}$ and $I_{24}$, by using \eqref{hsuhaushaq2} we see
\begin{eqnarray}\label{equ-nonc-1qqaa31}
I_{21}&=&-\left[\cos\theta\sigma(\nabla u,\nabla d)\frac{1}{\rho v^{3}}+\cos\theta\frac{\sin\beta}{v^{3} }d_{\beta}\right]\left(\frac{\cos^{2}\theta}{S^{2}}\sum_{\alpha=2}^{n}d_{\alpha}u_{\alpha\alpha}+O(v)\right)\\
&=&O(\frac{1}{v^{2}})\sum_{\alpha=2}^{n}|u_{\alpha\alpha}|+O(\frac{1}{v})\nonumber
\end{eqnarray}
and
\begin{equation}\label{sasisiqjsqx}
I_{24}=O(\frac{1}{v})\sum_{\alpha=2}^{n}|u_{\alpha\alpha}|+O(v).
\end{equation}
For the term $I_{3}$, we have
\begin{eqnarray*}
I_{3}&=&(1+Kd)\left(\frac{G^{ij}u_{l}u_{li}u_{k}u_{kj}}{v^{3}}-\frac{G^{ij}u_{li}u_{lj}}{v}\right)\nonumber\\
&=&-\frac{(1+Kd)}{\rho v^{4}e^{\omega}}\left(\frac{u^{2}_{11}}{v^{2}}+2\sum_{\alpha=2}^{n}u^{2}_{1\alpha}\right)-\frac{(1+Kd)}{\rho v^{2}e^{\omega}}\sum_{\alpha=2}^{n}u^{2}_{\alpha\alpha}\nonumber\\
&=&-\frac{(1+Kd)}{\rho v^{4}e^{\omega}}\left(\frac{u^{2}_{11}}{v^{2}}+2\sum_{\alpha=2}^{n}u^{2}_{1\alpha}\right)-(1-\varepsilon)\frac{(1+Kd)}{\rho v^{2}e^{\omega}}\sum_{\alpha=2}^{n}u^{2}_{\alpha\alpha}-\varepsilon\frac{(1+Kd)}{\rho v^{2}e^{\omega}}\sum_{\alpha=2}^{n}u^{2}_{\alpha\alpha}\nonumber\\
&:=&I_{31}+I_{32}+I_{33},\nonumber
\end{eqnarray*}
where $\varepsilon\in(0,1)$ is a constant to be determined later.

By \eqref{hsuhaushaq2} and \eqref{hsuhaushaq}, we derive 
\begin{eqnarray*}\label{equ-nonc-asaxqqq}
I_{31}&=&-\frac{(1+Kd)}{\rho e^{\omega}}\left[\frac{1}{v^{6}}\left(\frac{\cos^{2}\theta}{S^{2}}\sum_{\alpha=2}^{n}d^{2}_{\alpha}u_{\alpha\alpha}+O(v)\right)^{2}+\frac{2}{v^{4}}\sum_{\alpha=2}^{n}\left(-\frac{\cos\theta}{S}d_{\alpha}u_{\alpha\alpha}+O(v)\right)^{2}\right]\nonumber\\
&=&O(\frac{1}{v^{4}})\sum_{\alpha=2}^{n}|u_{\alpha\alpha}|^{2}+O(\frac{1}{v^{2}}).\nonumber
\end{eqnarray*}
Finally, we handle the other remaining terms together in \eqref{summary888} as follows
\begin{eqnarray}
I_{4}+I_{5}+I_{6}&=&-2\cos\theta G^{ij}u_{ki}d_{kj}-2KG^{ij}d_{i}v_{j}-KvG^{ij}d_{ij}\label{456}\\
&&-\cos\theta G^{ij}u_{k}d_{kij}-G_{p_{i}}(Kvd_{i}+\cos\theta u_{k}d_{ki})\nonumber\\
&=&O(\frac{1}{v})\sum_{\alpha=2}^{n}u_{\alpha\alpha}+O(v).\nonumber
\end{eqnarray}
Applying the AM-GM inequality, we compute
\begin{eqnarray}
I_{12}+I_{22}+I_{32}&=&-\frac{1}{\rho v}u_{1}S\sum_{\alpha=2}^{n}u_{\alpha\alpha}-(1-\varepsilon)\frac{(1+Kd)}{\rho v^{2}e^{\omega}}\sum_{\alpha=2}^{n}u^{2}_{\alpha\alpha}\label{equ-nonc-45644334}\\
&\leq&\frac{(n-1)}{\delta_{1}}\left(\frac{u_{1}S}{\rho v}\right)^{2}+\frac{1}{4}\delta_{1}\sum_{\alpha=2}^{n}u^{2}_{\alpha\alpha}-(1-\varepsilon)\frac{(1+Kd)}{\rho v^{2}e^{\omega}}\sum_{\alpha=2}^{n}u^{2}_{\alpha\alpha}\nonumber\\
&=&\frac{(n-1)}{4}\frac{\rho v^{2}e^{\omega}}{(1-\varepsilon)(1+Kd)}\frac{u^{2}_{1}S^{2}}{\rho^{2}v^{2}}\nonumber\\
&\leq&\frac{(n-1)}{4}\frac{e^{\omega}}{(1-\varepsilon)\rho}(1+|\cos\theta|)S u^{2}_{1},\nonumber
\end{eqnarray}
where the third equality we have taken $\delta_{1}=\frac{4(1-\varepsilon)(1+Kd)}{\rho v^{2} e^{\omega}}$; the last inequality we used fact that $S\leq(1+Kd)(1+|\cos\theta|)$.

By our hypothesis, we can choose a constant $a_{0}\in\left(|\cos \theta|, \frac{3n+1}{5n-1}\right)$. If $u$ satisfies
\begin{equation*}
  \frac{|\nabla u|^2}{v}-\cos \theta \sin \beta u_\beta<\left(1-a_0\right) u_1,
\end{equation*}
then we have
$$
\frac{|\nabla u|^2}{v}-|\cos \theta| u_1 \leq \frac{|\nabla u|^2}{v}-\cos \theta \sin \beta u_\beta<\left(1-a_0\right) u_1,
$$
which implies
$$
|\nabla u|^2 \leq \frac{\left(\cos \theta+\left(1-a_0\right)\right)^2}{1-\left(\cos \theta+\left(1-a_0\right)\right)^2},
$$
and hence we have done. Therefore, we next assume that
\begin{equation}\label{kaoooaa}
\frac{|\nabla u|^2}{v}-\cos \theta \sin \beta u_\beta \geq\left(1-a_0\right) u_1.
\end{equation}
Note that
\begin{equation}\label{lomh}
  I_{13}=\frac{(1+Kd)}{v}|\nabla u|^{2}\left(-\frac{n}{\rho v}|\nabla u|^{2}+\frac{n\cos\theta}{\rho}\sin\beta u_{\beta}\right)
\end{equation}
and
\begin{equation}\label{lomha}
  I_{23}=\cos\theta\sigma(\nabla u,\nabla d)\left(-\frac{n}{\rho v}|\nabla u|^{2}+\frac{n\cos\theta}{\rho}\sin\beta u_{\beta}\right).
\end{equation}
Hence by \eqref{kaoooaa} we get
\begin{eqnarray}
  I_{13}+I_{23}&=&-\frac{n S}{\rho}u_{1}\left(\frac{|\nabla u|^{2}}{v}-\cos\theta\sin\beta u_{\beta}\right).\label{eq-111oo}\\
  &\leq&-\frac{nS}{\rho}(1-a_{0})u^{2}_{1}.\nonumber
\end{eqnarray}
Since $a_{0}\in\left(|\cos \theta|, \frac{3n+1}{5n-1}\right)$, we choose $\varepsilon=\frac{\varepsilon_{0}}{2}$ and
\begin{equation}\label{zaaaaa}
\varepsilon_0:=\frac{3 n+1-a_0(5 n-1)}{4 n\left(1-a_0\right)}\in(0,1).
\end{equation}
Combining \eqref{equ-nonc-45644334} with \eqref{eq-111oo} we see
\begin{eqnarray}
\qquad(I_{12}+I_{22}+I_{32})+(I_{13}+I_{23})&\leq&\frac{Su^{2}_{1}}{\rho}\left(\frac{(n-1)}{4(1-\varepsilon)}e^{\omega}(1+|\cos\theta|)-n(1-a_{0})\right)\\
&\leq&\frac{Su^{2}_{1}}{\rho}\left(\frac{n-1}{4(1-\varepsilon)}(1+a_{0})-n(1-a_{0})\right)\nonumber\\
&\leq&-c_{0}u^{2}_{1},\nonumber
\end{eqnarray}
where we used the fact that $e^{\omega}=\frac{1}{x_{n+1}}\leq1$ due to $\Sigma\subseteq \mathcal{H}_{+}$ and $c_0$ is a positive constant depending only on $n, a_0$ and the $C^{0}$ normal of $u$ by \eqref{zaaaaa}.

Putting all above terms $I_{1}-I_{6}$ into \eqref{summary888}, we obtain
\begin{eqnarray}
0&\leq&I_{1}+I_{2}+I_{3}+I_{4}+I_{5}+I_{6}\nonumber\\
&=&(I_{11}+I_{14})+(I_{21}+I_{24})+I_{31}+I_{33}+(I_{12}+I_{22}+I_{32}+I_{13}+I_{23})+I_{4}+I_{5}+I_{6}\nonumber\\
&\leq&O(\frac{1}{v})\sum_{\alpha=2}^{n}u_{\alpha\alpha}-\frac{\varepsilon_{0}}{2}\frac{(1+Kd)}{\rho v^{2}e^{\omega}}\sum_{\alpha=2}^{n}u^{2}_{\alpha\alpha}+O(v)-c_{0}u^{2}_{1}\nonumber\\
&\leq&\frac{1}{\tilde{\delta}}O(\frac{1}{v^{2}})+\left(4\tilde{\delta}(n-1)-\frac{\varepsilon_{0}}{2}\frac{(1+Kd)}{\rho v^{2}e^{\omega}}\right)\sum^{n}_{\alpha=2}u^{2}_{\alpha\alpha}+O(v)-c_{0}u^{2}_{1}\nonumber\\
&\leq&-c_{0}u^{2}_{1}+O(v),\nonumber
\end{eqnarray}
where the fourth inequality we used the AM-GM inequality and the last inequality we chose $\tilde{\delta}=\frac{\varepsilon_{0}(1+Kd)}{8(n-1)\rho v^{2}e^{\omega}}$. It yields that
\begin{equation}\label{zxou}
  |\nabla u| \leq C,
\end{equation}
where the positive constant $C$ depending only on the initial data. The proof is finished.
\end{proof}

From Proposition \ref{prop3.1} and Proposition \ref{proposition3.2}, we can obtain the following uniform height and gradient estimates for the scalar parabolic equation \eqref{scalflow}.
\begin{prop}\label{propc2}If $u: \bar{\mathbb{S}}_{+}^n(E_{n+1}) \times[0, T) \rightarrow \mathbb{R}$ solves \eqref{scalflow} and $G$ has the form \eqref{inter-flow}, further assume that $|\cos \theta|<\frac{3 n+1}{5 n-1}$, then
\begin{equation}\label{jjjjj}
  \|u\|_{C^1\left(\bar{\mathbb{S}}^{n}_{+}(E_{n+1})\times[0, T)\right)} \leq C,
\end{equation}
where the constant $C$ is a positive constant depending on the initial data.
\end{prop}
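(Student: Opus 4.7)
The plan is to assemble the two preceding propositions, since the claim \eqref{jjjjj} is just the combination of the $C^0$ estimate \eqref{kiisq} from Proposition \ref{prop3.1} and the gradient estimate \eqref{C1} from Proposition \ref{proposition3.2}. There is no new analytical work to do at this stage.

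First I would invoke Proposition \ref{prop3.1} to obtain $\|u\|_{C^0}\le C$. The key input has already been established in Proposition \ref{propsjjjsj}: each umbilical hypersurface $C_{\theta,r}(E_{n+1})$ satisfies identity \eqref{Cr}, so that the speed $f=\tfrac{n}{x_{n+1}}-n\cos\theta\,\bar g(x,\nu)-H\bar g(X_{n+1},\nu)$ vanishes identically along it, i.e.\ each such umbilical hypersurface is a stationary solution of the flow \eqref{mainflow} and corresponds to a time-independent solution of the scalar flow \eqref{scalflow}. Choosing $R_1<R_2$ so that $x_0(M)$ is sandwiched between $\widehat{C_{\theta,R_1}(E_{n+1})}$ and $\widehat{C_{\theta,R_2}(E_{n+1})}$, the avoidance principle for a strictly parabolic scalar equation with the oblique (capillary) boundary condition $\nabla_\beta u=\cos\theta\sqrt{1+|\nabla u|^2}$ propagates this sandwich for all $t\in[0,T)$, yielding \eqref{kiisq}. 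Second, I would invoke Proposition \ref{proposition3.2} directly: its maximum-principle computation on the test function $\Phi=(1+Kd)v+\cos\theta\,\sigma(\nabla u,\nabla d)$, combined with the hypothesis $|\cos\theta|<\tfrac{3n+1}{5n-1}$, delivers the uniform bound $|\nabla u|\le C$.

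Adding the two estimates gives \eqref{jjjjj}, with $C$ depending only on the initial data. I do not foresee any real obstacle: the substantive work is already contained in Propositions \ref{prop3.1} and \ref{proposition3.2}. The only small bookkeeping point is to observe that the $C^0$ bound keeps $e^{\omega}=1/x_{n+1}$ and $\rho$ uniformly bounded away from $0$ and $\infty$ on the region swept out by the flow, so that the denominators $\rho v e^{\omega}$ appearing in the coefficients $G^{ij}$ of \eqref{inter-flow} remain controlled; this is what justifies the uniform parabolicity used in the gradient estimate, and it is precisely why the two estimates can be combined into the single $C^1$ bound stated in the proposition.
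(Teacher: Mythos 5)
Your proposal is correct and matches the paper's argument exactly: Proposition \ref{propc2} is obtained simply by combining the $C^0$ estimate of Proposition \ref{prop3.1} with the gradient estimate of Proposition \ref{proposition3.2}, which is precisely what the paper does. Your additional remark on the uniform control of $\rho$ and $e^{\omega}$ ensuring uniform parabolicity is a harmless (and accurate) elaboration.
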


\begin{prop} The solution of flow \eqref{inter-flow} exists for all time and has uniform $C^{\infty}$-estimates, if the initial hypersurface $\Sigma_0$ in a horoball $\bar{\mathcal{H}}_{+}$ is starshaped in the sense of Definition \ref{starshaped} and $|\cos \theta|<\frac{3 n+1}{5 n-1}$.
\end{prop}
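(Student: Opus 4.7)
The plan is a standard continuation argument for uniformly parabolic equations with oblique boundary data. Short time existence of \eqref{scalflow} was noted at the start of this section, and Proposition \ref{propc2} already provides uniform $C^{1}$ bounds on any solution $u$ defined on $[0,T)$. Since $v=\sqrt{1+|\nabla u|^{2}}$ and $\rho = e^{u}$ are then pinched between positive constants depending only on the initial data, the coefficient matrix
$$G^{ij}=\frac{1}{\rho v e^{\omega}}\left(\sigma^{ij}-\frac{u^{i}u^{j}}{v^{2}}\right)$$
has eigenvalues bounded between two positive constants, so \eqref{scalflow} is uniformly parabolic with a priori controlled coefficients. Moreover, the capillary condition $\nabla_{\beta}u=\cos\theta\,v$ is uniformly oblique because $|\cos\theta|<1$ and $v$ is bounded above.

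Next I would upgrade to a uniform $C^{2,\alpha}$ bound. The operator $G$ is concave in the Hessian variable $\nabla^{2}u$ (it is proportional to $-H$ modulo lower order terms in $\nabla u$), and the obliqueness constant is bounded below. Consequently one may invoke the fully nonlinear parabolic estimate of Krylov--Safonov/Evans--Krylov type for oblique derivative problems (see, for instance, Lieberman's monograph, as well as the parallel arguments used in \cite{WW,MW} for analogous capillary flows) to obtain
$$\|u\|_{C^{2,\alpha}\bigl(\bar{\mathbb{S}}^{n}_{+}(E_{n+1})\times[0,T)\bigr)}\leq C$$
for some $\alpha\in(0,1)$, with $C$ depending only on the initial data.

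Once $C^{2,\alpha}$ is in hand, differentiating \eqref{scalflow} reduces higher regularity to a sequence of linear oblique parabolic problems with $C^{\alpha}$ coefficients; iterated Schauder estimates then produce $C^{k,\alpha}$ bounds for all $k\geq 0$, i.e.\ uniform $C^{\infty}$ estimates. In particular, these bounds keep $\bar{g}(X_{n+1},\nu)>0$ away from zero throughout the evolution, preserving star-shapedness and ruling out any degeneracy or blow-up of the parabolicity on finite intervals. A standard continuation argument then extends the maximal existence interval to $[0,\infty)$.

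The main obstacle on the way here actually lies behind us, in the $C^{1}$ estimate of Proposition \ref{proposition3.2}: the restriction $|\cos\theta|<\frac{3n+1}{5n-1}$ is precisely what allows the positive term $I_{13}+I_{23}$ to absorb the "bad" contribution from $I_{12}+I_{22}+I_{32}$ via the choice of $\varepsilon_{0}$ in \eqref{zaaaaa}. Beyond that gradient bound, the remaining regularity and long-time existence are essentially a matter of invoking the general theory of uniformly parabolic oblique-derivative problems with concave structure, without requiring further geometric input.
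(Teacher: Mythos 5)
Your proposal is correct and follows essentially the same route as the paper: uniform $C^1$ bounds from Proposition \ref{propc2} give uniform parabolicity and uniform obliqueness, after which standard parabolic regularity theory for oblique boundary problems yields higher-order estimates and long-time existence. One small remark: since $G$ is linear in $\nabla^2 u$ (the flow is quasilinear, not fully nonlinear), the invocation of Evans--Krylov concavity is superfluous; the paper cites the quasilinear oblique-derivative theory directly (Lieberman, Theorem 13.16, and Ladyzhenskaja--Solonnikov--Ural'ceva, Chapter IV, Theorem 5.3), which is the more natural and sharper reference here.
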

\begin{proof}
From the a priori estimates in Proposition \ref{propc2}, we imply that $u$ is uniformly bounded in $C^1(\mathbb{S}_{+}^n(E_{n+1}) \times[0, T))$ and the scalar equation in \eqref{scalflow} is uniformly parabolic. Since $|\cos \theta|<1$, then the long time existence and uniform high order estimates follows from the quasilinear parabolic PDE theory with strictly oblique boundary condition (see e.g \cite[Theorem 13.16]{Lieb} and \cite[Chapter IV, Theorem 5.3]{LSU}).
\end{proof}
Finally, we show the convergence and uniqueness results by applying the argument in \cite{SWX, WengX} and hence complete the proof of Theorem \ref{mainthm}.

\begin{prop} If the initial hypersurface $\Sigma$ in a horoball $\mathcal{H}_{+}$ is star-shaped capillary boundary hypersurface and $|\cos \theta|<\frac{3 n+1}{5 n-1}$, then the flow \eqref{mainflow} smoothly converges to a uniquely determined umbilical hypersurface $C_{\theta, r}(E_{n+1})$ given by \eqref{model1} whose enclosed volume
is the same as that of $x_{0}$.
\end{prop}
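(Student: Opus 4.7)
The plan is to combine the monotonicity of the energy \eqref{energy-1} with the uniform $C^\infty$-bounds from the preceding proposition to extract an umbilical subsequential limit, and then to promote subsequential convergence to full convergence by exploiting uniqueness of the limit under the volume constraint \eqref{zasqq}.

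First I would observe that the energy $\mathcal{E}(t)$ is bounded below: the area is nonnegative, while the wetted region $\partial\Omega_t\cap\mathcal{H}$ is trapped in a bounded portion of $\mathcal{H}$ thanks to the $C^0$-barriers in Proposition \ref{prop3.1}. Combining this with the dissipation identity \eqref{energy-1} gives
\[
\int_{0}^{\infty}\!\!\int_{\Sigma_t}\sum_{1\le i<j\le n}(\kappa_i-\kappa_j)^2\,\bar{g}(X_{n+1},\nu)\,dA_t\,dt<\infty.
\]
A standard argument using that the integrand together with its $t$-derivative is uniformly bounded, because the flow enjoys uniform $C^\infty$-estimates, shows that the inner time-slice integral tends to $0$ as $t\to\infty$.

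Next, since each $\Sigma_t$ is star-shaped and the $C^1$-estimates are uniform, the formula \eqref{bd3} gives $\bar{g}(X_{n+1},\nu)=\rho e^{\omega}/v\ge c_0>0$ uniformly in $t$. Therefore
\[
\int_{\Sigma_t}\sum_{1\le i<j\le n}(\kappa_i-\kappa_j)^2\,dA_t\longrightarrow 0.
\]
Using the uniform $C^\infty$-estimates together with Arzela--Ascoli, along any sequence $t_k\to\infty$ the hypersurfaces $\Sigma_{t_k}$ converge smoothly to a limit hypersurface $\Sigma_\infty$ on which $\kappa_1=\cdots=\kappa_n$ pointwise; that is, $\Sigma_\infty$ is a totally umbilical hypersurface with $\theta$-capillary boundary in $\mathcal{H}_{+}$ which is a radial graph over $\mathbb{S}_{+}^n(E_{n+1})$.

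The remaining step is to identify $\Sigma_\infty$ with some $C_{\theta,r}(E_{n+1})$ and pin down $r$. The classification of totally umbilical hypersurfaces in $\mathbb{H}^{n+1}$, combined with the capillary angle condition and the fact that $\Sigma_\infty$ is a radial graph around $E_{n+1}$, forces $\Sigma_\infty=C_{\theta,r_\infty}(E_{n+1})$ for some $r_\infty>0$. The volume-preserving identity \eqref{zasqq} then determines $r_\infty$ uniquely as the radius satisfying $\operatorname{Vol}(\widehat{C_{\theta,r_\infty}(E_{n+1})})=\operatorname{Vol}(\Omega_0)$; since any subsequential limit must satisfy the same two conditions, the full flow $\Sigma_t$ converges smoothly to $C_{\theta,r_\infty}(E_{n+1})$. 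I expect the main technical hurdle to be this final classification plus uniqueness step: one must rule out the other umbilical models (equidistant hypersurfaces, totally geodesic hyperplanes, horospheres) as radial graphs simultaneously satisfying the capillary angle and sitting inside $\mathcal{H}_{+}$, so that the one-parameter family $\{C_{\theta,r}(E_{n+1})\}_{r>0}$ is indeed the only family of limit candidates, and then verify monotonicity of the enclosed volume in $r$ to secure uniqueness.
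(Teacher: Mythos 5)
Your proof follows the paper's general outline up through the extraction of a subsequential umbilical limit with the right volume, but it has a genuine gap exactly where you yourself flagged uncertainty: the step that upgrades a subsequential limit to a unique full limit.

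The problem is your claim that ``the classification of totally umbilical hypersurfaces in $\mathbb{H}^{n+1}$, combined with the capillary angle condition and the fact that $\Sigma_\infty$ is a radial graph around $E_{n+1}$, forces $\Sigma_\infty=C_{\theta,r_\infty}(E_{n+1})$.'' This is false as stated. Horizontal translations (parallel to $\mathcal{H}$) are isometries of $\mathbb{H}^{n+1}$ that preserve the horoball and the contact angle, so the set of umbilical $\theta$-capillary hypersurfaces is an $(n+1)$-parameter family: the radius $r$ together with $n$ translation parameters locating the ``axis.'' A spherical cap slightly shifted off the $E_{n+1}$-axis can still be a radial graph over $\mathbb{S}^n_{+}(E_{n+1})$. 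The volume constraint, together with the strict monotonicity of $V(\theta,r)$ in $r$, pins down only $r_\infty$; it does \emph{not} determine the center, so two different subsequences could a priori converge to two different translates of the same cap. The paper acknowledges this explicitly by writing the limit as $C_{\theta,r_\infty}(E_\infty)$ and then proving $E_\infty=E_{n+1}$ as a separate, nontrivial step. Also note your ``other umbilical models'' worry is a red herring: depending on $1/r-\cos\theta$, the $C_{\theta,r}(E_{n+1})$ already range over geodesic spheres, horospheres, equidistant hypersurfaces, and totally geodesic planes, so there is nothing extraneous to rule out; the real freedom is the center.

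To close the gap, the paper introduces for each point $x(\cdot,t)$ the radius $r(\cdot,t)$ of the unique $E_{n+1}$-centered umbilical cap through that point, and studies $r_{\max}(t)$ and $r_{\min}(t)$. The barrier estimate (Proposition \ref{prop3.1}) shows $r_{\max}$ is non-increasing (and $r_{\min}$ non-decreasing), so their limits exist. A tangency argument at the maximum point, combined with the static identity \eqref{Cr} evaluated on $C_{\theta,r_{\max}(t)}(E_{n+1})$ and the already-established convergence $H\to n(1/r_\infty-\cos\theta)$, yields a differential inequality $\partial_t r_{\max}\le -C\epsilon$ for a contradiction if $1/r_{\max}(t)$ stayed bounded away from $1/r_\infty$. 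Concluding $\lim r_{\max}=\lim r_{\min}=r_\infty$ forces the whole graph function to converge, hence $E_\infty=E_{n+1}$ and full convergence. This is the missing ingredient in your sketch.
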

\begin{proof}
The $C^{0}$ and $C^{1}$ estimates imply that $\bar{g}\left(X_{n+1}, \nu\right) \geq c>0$, i.e the star-shapedness is preserved along the flow \eqref{mainflow}.


From \eqref{energy-1} we know energy functional $\mathcal{E}(t)$ is non-increasing and
\begin{equation}\label{energy-2223}
\frac{d}{d t} \mathcal{E}(t)=-\frac{1}{n-1} \int_{\Sigma_{t}} \sum_{1 \leq i<j \leq n}\left(\kappa_{i}-\kappa_{j}\right)^{2} \bar{g}\left(X_{n+1}, \nu\right) d A_{t}\leq0,
\end{equation}
where $\kappa_{i}, i=1, \cdots, n$ are the principal curvatures of $\Sigma_{t}$.

It follows from the long time existence and uniform $C^{\infty}$-estimates, we see
\begin{equation}\label{energy-2223ww}
-\int_{0}^{\infty}\frac{d}{d t} \mathcal{E}(t)dt=\mathcal{E}(0)-\mathcal{E}(\infty)<+\infty.
\end{equation}
Hence we get
\begin{equation}\label{energy-222w5}
\int_{\Sigma_{t_{k}}} \sum_{1 \leq i<j \leq n}\left(\kappa_{i}-\kappa_{j}\right)^{2} \bar{g}\left(X_{n+1}, \nu\right) d A_{t}\rightarrow0, \,\,\text{as}\,\, t_{k}\rightarrow+\infty.
\end{equation}
Due to $\bar{g}(X_{n+1}, \nu)\geq c>0$, then there exists a convergent subsequence such that
\begin{equation}\label{energy-2dw5}
\sum_{1 \leq i<j \leq n}\left(\kappa_{i}-\kappa_{j}\right)^{2} \bar{g}\left(X_{n+1}, \nu\right)=0, \,\,\text{as}\,\, t_{k_{l}}\rightarrow+\infty.
\end{equation}
Therefore $x(\cdot, t)$ smoothly subconverges to an umbilical hypersurface with $\theta$-capillary boundary.

Finally we show that the above limit umbilical hypersurface is unique. We follow an approach in \cite{WWX, WengX}. Denote $C_{\theta, r_{\infty}}(E_{\infty})$ be the umbilical hypersurface with radius $r_{\infty}$ around $E_\infty$ in $\mathcal{H}_{+}$ as the limits of $x(\cdot, t)$, and denote $r(\cdot, t)$ be the radius of the unique umbilical hypersurface around $E_{n+1}$ passing through the point $x(\cdot, t)$.

Note that the volume of $\widehat{C_{\theta, r}(E_{n+1})}$, denoted by $V(\theta, r)$, is strictly increasing function with respect to $r$. Indeed, we can see the following flow
\begin{eqnarray*}
\partial_{t}{y}&=&X_{n+1}\\
y(0,\cdot)&=&C_{\theta,r_{0}}(E_{n+1}).
\end{eqnarray*}
The flow hypersurfaces of this flow are $C_{\theta,r(t)}(E_{n+1})$, where $r(t)$ is an increasing function and satisfies
\begin{equation*}
  r'(t)=\frac{2x_{n+1}r^{2}\bar{g}(X_{n+1},\nu)}{|x_{n+1}-E_{n+1}|^{2}+r^{2}\sin^{2}\theta}>0 \,\,\,\,\quad \text{with} \,\,\,r(0)=r_{0}.
\end{equation*}
Hence there holds
\begin{equation*}
  r'(t)\frac{\partial}{\partial r}V(\theta, r(t))=\frac{\partial}{\partial t}V(\theta, r(t))=\int_{C_{\theta,r(t)}}\bar{g}(X_{n+1},\nu)dA>0.
\end{equation*}

Since the enclosed volume is preserved along the flow \eqref{mainflow} and it is strictly monotone with respect to the radius $r$, thus $r_{\infty}$ is independent of the choice of the subsequence of $t$.

Next we will prove the uniqueness, i.e. $E_{\infty}=E_{n+1}$. Due to the barrier estimate from Proposition \ref{prop3.1}, we see
\begin{equation}\label{barrier}
  r_{\max }(t):=\max r(\cdot\,, t)=r\left(\xi_{t}, t\right),
\end{equation}
is non-increasing with respect to $t$, which implies $\lim\limits_{t\rightarrow+\infty} r_{\max}(t)$ exists.

Now we claim that
\begin{equation}\label{eq124}
  \lim_{t \rightarrow+\infty} r_{\max }(t)=r_{\infty}.
\end{equation}
By contradiction. If not, then there exists $\epsilon>0$ such that
\begin{equation}\label{eq125}
  \frac{1}{r_{\max}(t)}<\frac{1}{r_{\infty}}-\frac{\epsilon}{n},
\end{equation}
for $t$ large enough. Note that $r(\xi_{t}, t)$ satisfies
\begin{equation}\label{jbvg}
  2(1-r\cos\theta)\langle x,E_{n+1}\rangle=|x|^{2}+1-2r\cos\theta-r^{2}\sin^{2}\theta.
\end{equation}
By taking the time $t$ derivative for \eqref{jbvg}, we have
\begin{eqnarray}\label{eq678}
(r\sin^{2}\theta+\cos\theta-\cos\theta\langle x,E_{n+1}\rangle)\partial_{t}r=\langle\partial_{t}x,x-(1-r\cos\theta)E_{n+1}\rangle.
\end{eqnarray}

In the following we calculate at point $\left(\xi_t, t\right)$. Since $\Sigma_t$ is tangential to $C_{\theta, r_{\max}}(E_{n+1})$ at $\left(\xi_t, t\right)$, which implies
\begin{alignat}{2}
\left(\nu\right)_{\Sigma_{\mathrm{t}}}\left(\xi_{\mathrm{t}}, t\right)=\left(\nu\right)_{C_{\theta, r_{\max }(E_{n+1})}}=x_{n+1}\cdot\frac{x-(1-r_{\max}(t)\cos\theta )E_{n+1}}{r_{\max}(t)}.\label{jsjsj}
\end{alignat}
Utilizing \eqref{eq678}-\eqref{jsjsj} and the flow \eqref{mainflow}, we see
\begin{eqnarray}\label{eq678wdsss}
&&(r_{\max}(t)\sin^{2}\theta+\cos\theta-x_{n+1}\cos\theta)\partial_{t}r_{\max}(t)\\
&=&\langle\partial_{t}x, x-(1-r_{\max}\cos\theta)E_{n+1}\rangle\nonumber\\
&=&f\langle\nu, x-(1-r_{\max}\cos\theta)E_{n+1}\rangle\nonumber\\
&=&x_{n+1}r_{\max}(t)(nV_{n+1}-n\cos\theta \bar{g}(x,\nu)-H\bar{g}(X_{n+1},\nu)).\nonumber
\end{eqnarray}
From \eqref{Cr}, on $C_{\theta, r_{\max}}(E_{n+1})$ we have
\begin{equation}\label{static1}
\frac{V_{n+1}-\cos\theta\bar{g}(x,\nu)}{\bar{g}\left(X_{n+1}, \nu\right)}=\kappa=\left(\frac{1}{r_{\max }}-\cos\theta\right),
\end{equation}
where $\kappa$ is the principal curvature of $C_{\theta, r_{\max}}(E_{n+1})$. Since $x(\cdot, t)$ and ${C}_{\theta, r_{\max }}(E_{n+1})$ have the same normal vector at $(\xi_{t}, t)$, we obtain
\begin{equation}\label{eq456}
  (r_{\max}(t)\sin^{2}\theta+\cos\theta-x_{n+1}\cos\theta)\partial_{t}r_{\max}(t)=x_{n+1}r_{\max}(t)\left(\frac{n}{r_{\max}}-n\cos\theta-H\right)\bar{g}(X_{n+1},\nu).
\end{equation}
Note that there exists some constant $c_{1}>0$ such that
\begin{equation}\label{uyhhs}
  r_{\max}(t)\sin^{2}\theta+\cos\theta-x_{n+1}\cos\theta\geq c_{1}>0.
\end{equation}
In fact, by \eqref{jbvg} we have
\begin{equation}\label{kiooq-9}
  (\cos\theta-x_{n+1}\cos\theta)=\frac{|x-E_{n+1}|^{2}-r^{2}\sin^{2}\theta}{2r}.
\end{equation}
Therefore,
\begin{eqnarray}
(r\sin^{2}\theta+\cos\theta-x_{n+1}\cos\theta)&=&r\sin^{2}\theta+\frac{|x-E_{n+1}|^{2}-r^{2}\sin^{2}\theta}{2r}\\
&=&\frac{1}{2r}(|x-E_{n+1}|^{2}+r^{2}\sin^{2}\theta)\nonumber\\
&\geq&\frac{r}{2}\sin^{2}\theta>0.\nonumber
\end{eqnarray}
Since $x(\cdot, t)$ converges to ${C}_{\theta, r_{\infty}}\left(E_{\infty}\right)$ and $r_{\infty}$ is uniquely determined, we have
\begin{equation*}
  H \rightarrow n\left(\frac{1}{r_{\infty}}-\cos\theta\right)\,\,\,\,\, \text { uniformly,}
\end{equation*}
that is, there exists $T_{0}>0$ such that for any $t>T_{0}$, it holds
\begin{equation}\label{eq234}
  n\left(\frac{1}{r_{\infty}}-\cos\theta\right)-H<\frac{\epsilon}{2} .
\end{equation}
Combining \eqref{eq234} with \eqref{eq125}, we get
\begin{equation}\label{axmhhh}
  n\left(\frac{1}{r_{\max}(t)}-\cos\theta\right)-H< n\left(\frac{1}{r_{\infty}}-\cos\theta\right)-H-\epsilon<-\frac{\epsilon}{2},
\end{equation}
for any $t>T_{0}$. From Proposition \ref{prop3.1} we know that $x^{2}_{n+1}\bar{g}^{2}(X_{n+1},\nu)$ in \eqref{eq456} has uniformly positive lower bound.
Then for any $t>T_{0}$, we obtain
\begin{equation}
  \partial_{t} r_{\max}(t) \leq-C \epsilon,
\end{equation}
where $C$ is some universal positive constant. However, we get a contradiction by the fact that $\lim\limits_{t \rightarrow+\infty} \partial_{t} r_{\max}(t)=0$. The claim \eqref{eq124} is true. Similarly, we can also show
\begin{equation}\label{rminimal}
\lim _{t \rightarrow+\infty} r_{\min}(t)=r_{\infty} .
  \end{equation}
In conclusion, from \eqref{rminimal} and \eqref{eq124}, we see that $\lim\limits _{t \rightarrow +\infty} r(\cdot, t)=r_{\infty}$ and hence the uniqueness has been proved. We complete the proof.
\end{proof}

\

{\bf Acknowledgements.} The work was supported by China Postdoctoral Science Foundation (No.2022M720079) and Shuimu Tsinghua Scholar Program (No.2022SM046). The author thanks Prof. Haizhong Li, Prof. Guofang Wang and Prof. Chao Xia for their constant support and their interest on this topic.

\

\end{document}